\pgfplotsset{compat=1.16}
\pgfplotsset{
	width=0.75\textwidth,
	minor tick num=1
}
\newtheorem{definition}{Definition}
\newtheorem{lemma}{Lemma}
\newtheorem{theorem}{Theorem}
\newtheorem{remark}{Remark}
\newtheorem{conjecture}{Conjecture}
\newtheorem{corollary}{Corollary}
\author{Jon Kay}
\email{\href{mailto:jokay@ucsc.edu}{jokay@ucsc.edu}}
\address{Department of Mathematics. University of California, Santa Cruz.}
\author{Gerem\'ias Polanco}
\email{\href{mailto:gpolanco@smith.edu}{gpolanco@smith.edu}}
\address{Department of Mathematics. Smith College.}
\title{Relaxed Wythoff has All Beatty Solutions}
\date{\today}
\DeclareMathOperator*{\mex}{mex}
\newcommand{\goldenformula}{ {(2\!-\!t\!+\!\sqrt{t^2\!+\!4})}/2  }
\definecolor{yellow}{HTML}{F0E442}
\definecolor{green}{HTML}{009E73}
\definecolor{blue}{HTML}{0072B2}
\definecolor{red}{HTML}{D55E00}
\begin{document}

\begin{abstract}
We find conditions
under which the $P$-positions
of three subtraction games arise as pairs of complementary
Beatty sequences. The first game is due to Fraenkel
and the second is an extension of the first game to non-monotone
settings. We show that the $P$-positions of the second game can be inferred from
the recurrence of Fraenkel's paper if a certain inequality is
satisfied. This inequality is shown to be necessary if the $P$-positions are
known to be pairs of complementary Beatty
sequences, and the family
of irrationals for which this inequality holds is explicitly given.
We highlight several games in the literature that have $P$-positions as
pairs of complementary Beatty sequences with slope in this family.
The third game we present is novel, and we show
that the $P$-positions can be inferred
from the same recurrence in any setting.
It is shown that any pair of complementary Beatty sequences arises as 
the $P$-positions of some game in this family.
We also provide background on some inverse problems which have appeared
in the field over the last several years, in particular the Duch\^ene-Rigo
conjecture. This paper presents a solution
to the Fraenkel problem posed at the 2011 BIRS workshop, a modification of
the Duch\^ene-Rigo conjecture.
\end{abstract}

\maketitle


\section{Introduction}\label{introduction}

Wythoff's game starts with two piles, labeled $A$ and $B$, of finitely many 
tokens. Two players alternate, and at each turn the current player may either
\begin{enumerate}
\item[(a)] remove any positive number of tokens from a single pile, possibly
    the entire pile,
\item[(b)] or remove an equal amount of tokens from piles $A$ and $B$ simultaneously.
\end{enumerate} 
Under normal gameplay\footnote{In the literature, one also encounters mis\`ere 
gameplay under which the player who leaves both piles empty loses. Mis\`ere 
play is the typical setting of Nim, which we do  not adopt here.}, the player 
who leaves both piles empty wins. The game is itself a modification of Nim 
(under normal gameplay). In Nim, there may be more than two piles, and players 
may make only the first move (a) listed above. It is for this reason that the 
moves of type (a) are sometimes called \emph{Nim} moves. Moves of type (b) 
are sometimes called \emph{diagonal} moves or \emph{bishop} moves, as indicated 
by the representation of a game position by a piece on a chessboard 
\cite{wythoffwisdom} \cite{larsson_restrictions}.

If piles $A$ and $B$ contain $x$ and $y$ tokens, respectively, then label the 
game position as $(x,y)$. It is customary to make the assumption with no loss 
of generality that $x\leq y$. Label a position $(x,y)$ as an $N$-position if 
any player moving \emph{from} $(x,y)$ has a winning strategy. Otherwise, label 
$(x,y)$ as a $P$-position. Let $\mathcal{N}$ denote the set of $N$-positions 
and let $\mathcal{P}$ denote the set of $P$-positions. It is well known that 
the set of all game positions is partitioned by $\mathcal{P}$ and $\mathcal{N}$ 
because Wythoff's game is an acyclic combinatorial game with no ties 
\cite{siegel}.

Wythoff computed the $P$-positions of his game as the set
\begin{align}
\left\{\left(\left[ n\phi\right],
    \left[ n\phi^2\right]\right) \;|\; n\in\mathbb{Z}_{\geq 0} \right\}
\end{align}
where $[\cdot]$ denotes the floor function and $\phi={\left(1+\sqrt{5}\right)}/2$. 
Projecting onto the components of these pairs yields the sequences of integers 
$\{[n\phi]\}$ and $\{[n\phi^2]\}$ called the Beatty sequences of slope $\phi$ and
$\phi^2$, respectively. It is no 
coincidence that $1/\phi+1/\phi^2=1$, for Lord Rayleigh discovered in 1894 this 
condition precisely characterizes when the sequences $\{[n\phi]\}$ and
$\{[n\phi^2]\}$ exactly cover the positive integers \cite{schoenberg_1982}, a 
property which is intimately connected to the study of two-player
subtraction games. 
Beatty sequences have a rich literature that touches upon many topics in number 
theory and combinatorics (see for instance \cite{allouche_shallit_2003},
\cite{polanco_2017}, and the references therein). Formally they can be defined as
\begin{definition}[Homogeneous Beatty Sequence]
Let $\alpha>0$ be irrational. The sequence
\begin{align}
\{a_n\}_{n=0}^\infty = \left\{[n\alpha]\right\}_{n=0}^\infty
\end{align}
is called the \emph{(homogeneous) Beatty sequence} with slope $\alpha$.
In this paper, we are concerned only with homogeneous Beatty sequences,
so the first adjective will be omitted throughout.
\end{definition}
It is for the next theorem, mentioned in the previous paragraph, that we
may refer to a pair of \emph{complementary} Beatty sequences:
\begin{theorem}[Rayleigh-Beatty]\label{rayleigh}
Let $\alpha<\beta$ be positive irrationals and define
\begin{align}
A = \{[n\alpha]\}_{n=0}^\infty \quad B = \{[n\beta]\}_{n=0}^\infty.
\end{align}
Then $A\cup B=\mathbb{Z}_{\geq 0}$ and $A\cap B=\{0\}$ if and only if
$1/\alpha + 1/\beta=1$. In this case, $A$ and $B$ are said to be the
complementary Beatty sequencess parameterized by $\alpha$ or $\beta$.
\end{theorem}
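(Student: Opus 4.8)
The plan is to reduce both directions to one counting identity for truncated sequences, so I begin with an elementary remark. Fix a positive irrational $\gamma$ and a positive integer $N$. Since $n\gamma\notin\mathbb{Z}$, the inequality $[n\gamma]\le N$ is equivalent to $n\gamma<N+1$, i.e.\ $n<(N+1)/\gamma$; as $(N+1)/\gamma$ is irrational, the number of integers $n\ge1$ with this property is exactly $[(N+1)/\gamma]$. When moreover $\gamma>1$, the map $n\mapsto[n\gamma]$ is strictly increasing, so $[(N+1)/\gamma]$ also equals $\bigl|\{[n\gamma]:n\ge1\}\cap\{1,\dots,N\}\bigr|$. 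Abbreviate $a(N):=[(N+1)/\alpha]$ and $b(N):=[(N+1)/\beta]$.

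($\Leftarrow$) Assume $1/\alpha+1/\beta=1$; together with $\alpha<\beta$ this forces $1<\alpha<2<\beta$, so the remark applies to both slopes. Put $M=N+1$ and write $\{x\}:=x-[x]$. Since $1/\alpha+1/\beta=1$,
\[
a(N)+b(N)=(M/\alpha+M/\beta)-\bigl(\{M/\alpha\}+\{M/\beta\}\bigr)=M-\bigl(\{M/\alpha\}+\{M/\beta\}\bigr).
\]
Because $M/\alpha+M/\beta=M\in\mathbb{Z}$ while $\{M/\alpha\},\{M/\beta\}\in(0,1)$, the bracketed sum is an integer in $(0,2)$, hence $1$, giving $a(N)+b(N)=N$ for all $N\ge1$. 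Now I induct on $N$: assume $\{1,\dots,N-1\}\subseteq A\cup B$ with $A\cap B\cap\{1,\dots,N-1\}=\emptyset$. Strict monotonicity of $n\mapsto[n\alpha]$ gives $a(N)-a(N-1)\in\{0,1\}$, equal to $1$ exactly when $N\in A$, and similarly for $b$; since these two differences sum to $a(N)+b(N)-a(N-1)-b(N-1)=1$, exactly one of $A,B$ contains $N$. With $0\in A\cap B$ this yields $A\cup B=\mathbb{Z}_{\ge0}$ and $A\cap B=\{0\}$.

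($\Rightarrow$) Assume $A\cup B=\mathbb{Z}_{\ge0}$ and $A\cap B=\{0\}$. I first rule out $\alpha<1$: in that case $n\mapsto[n\alpha]$ climbs from $0$ to $\infty$ in steps of $0$ or $1$, so $A=\mathbb{Z}_{\ge0}$, whence $B\subseteq A$ and $A\cap B=B\ne\{0\}$, a contradiction. Hence $\alpha,\beta>1$ and the remark applies. The hypothesis says each of $1,\dots,N$ lies in exactly one of $A,B$, so $a(N)+b(N)=N$, i.e.\ $[M/\alpha]+[M/\beta]=M-1$ for all $M\ge1$ (with $M=N+1$). As $[x]<x<[x]+1$ for irrational $x$, this forces $M-1<M/\alpha+M/\beta<M+1$, so $\bigl|1/\alpha+1/\beta-1\bigr|<1/M$ for every $M$, and therefore $1/\alpha+1/\beta=1$.

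The two fractional-part estimates are routine; the point that deserves care is the passage from ``$A$ and $B$ partition $\mathbb{Z}_{\ge0}$'' to the clean identity $a(N)+b(N)=N$, which is precisely why it pays to first establish that the relevant slopes exceed $1$, making each Beatty map injective so that no internal repetition inflates the count. I do not expect any step to be a serious obstacle.
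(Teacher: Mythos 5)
Your argument is correct. Note that the paper does not prove Theorem \ref{rayleigh} at all: it is quoted as the classical Rayleigh--Beatty theorem with a pointer to \cite{schoenberg_1982}, so there is no in-paper proof to compare against. What you give is the standard self-contained counting proof, and each step checks out: the preliminary remark correctly converts $[(N+1)/\gamma]$ into the number of terms of the Beatty sequence in $\{1,\dots,N\}$ (using irrationality to avoid boundary cases and $\gamma>1$ for injectivity of $n\mapsto[n\gamma]$); in the ($\Leftarrow$) direction the fractional-part identity $\{M/\alpha\}+\{M/\beta\}=1$ gives $a(N)+b(N)=N$, and the difference argument $a(N)-a(N-1)+b(N)-b(N-1)=1$ shows each positive integer lies in exactly one sequence (the induction wrapper is harmless but not actually needed, since the step never uses the hypothesis); in the ($\Rightarrow$) direction you correctly dispose of $\alpha<1$ before invoking the counting identity, and the bound $\bigl|1/\alpha+1/\beta-1\bigr|<1/M$ for all $M$ closes the argument. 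You were also right to flag injectivity as the point where slopes below $1$ would break the count. So this is a complete and correct proof of a statement the paper leaves to the literature.
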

In \cite{fraenkel_threefronts} and \cite{fraenkelborosh}, a countable family of Wythoff games is defined by adjoining valid moves. For a 
fixed positive integer $t$, the game $t$-Wythoff is played with
standard Nim moves and an extended diagonal move 
in which a player who removes $k$ and $\ell$ tokens from both piles
may do so if and only if $|k-\ell|<t$. The $P$-positions of games
in this family are given by the complementary Beatty sequences parameterized by $\alpha$ where
\begin{align}
\alpha = \frac{2-t+\sqrt{t^2+4}}{2}. \label{goldenfamily}
\end{align}
Note that $\alpha=\phi$ is the special case of $t=1$, which corresponds to the 
original Wythoff's game with golden $P$-positions. This family of
irrationals will appear repeatedly and its significance lies in the following.
\begin{lemma}\label{goldenfamilylemma}
If $\alpha=\goldenformula$ for some integer $t>0$ and $\beta$ satisfies
$1/\alpha+1/\beta=1$, then $\beta=\alpha+t$ and, in particular,
$\{\alpha\}=\{\beta\}$.
\end{lemma}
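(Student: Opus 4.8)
\textit{Proof plan.} The statement is purely algebraic, and the plan is to reduce it to the minimal polynomial of $\alpha$ together with a one-line rearrangement of the Rayleigh condition $\conjugate$.

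First I would extract the quadratic satisfied by $\alpha$. Writing $\alpha=\goldenformula$ and isolating the radical gives $2\alpha+t-2=\sqrt{t^2+4}$, and squaring both sides, then cancelling the common $t^2+4$, yields
\[
\alpha^2+(t-2)\alpha-t=0 .
\]
Along the way I would record that $\alpha$ is irrational --- $t^2+4$ lies strictly between $t^2$ and $(t+1)^2$ for $t\ge 2$ and equals $5$ for $t=1$, so it is never a perfect square --- and that $\alpha>1$, since $\sqrt{t^2+4}>t$ forces $2-t+\sqrt{t^2+4}>2$. Consequently $\alpha$ is a legitimate Beatty slope and the number $\beta$ determined by $\conjugate$ is a well-defined positive real, namely $\beta=\alpha/(\alpha-1)$, so that Theorem~\ref{rayleigh} is applicable.

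Next, to prove $\beta=\alpha+t$ it suffices to verify the identity $\alpha/(\alpha-1)=\alpha+t$. Clearing the denominator, this reads $\alpha=(\alpha+t)(\alpha-1)=\alpha^2+(t-1)\alpha-t$, i.e.\ $\alpha^2+(t-2)\alpha-t=0$, which is precisely the quadratic obtained in the previous step. Hence $\beta=\alpha+t$, and since $t\in\mathbb{Z}$ we conclude $\{\beta\}=\{\alpha+t\}=\{\alpha\}$, which is the final assertion.

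There is no substantive obstacle here: the proof is a short computation, and the only point deserving a line of care is confirming, via $\sqrt{t^2+4}>t\ge 1$, that $\alpha$ indeed lies in $(1,\infty)$ and is irrational, so that speaking of the complementary sequence parameterized by $\alpha$ (and hence of $\beta$) is legitimate in the first place.
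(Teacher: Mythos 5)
Your proof is correct and amounts to essentially the same short computation as the paper's: the paper rationalizes $1/(\alpha-1)=(\sqrt{t^2+4}+t)/2=\alpha-1+t$ and concludes $\beta=1+1/(\alpha-1)=\alpha+t$, while you reduce the claimed identity $\alpha/(\alpha-1)=\alpha+t$ to the quadratic $\alpha^2+(t-2)\alpha-t=0$ satisfied by $\alpha$ --- the same algebra packaged differently. Your extra observations that $\alpha>1$ and that $t^2+4$ is never a perfect square are harmless added care, and the final step $\{\beta\}=\{\alpha+t\}=\{\alpha\}$ matches the paper exactly.
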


In the history of combinatorial games, it has always been attractive to
study the $P$-positions of a given game. This is the forward problem.
Lately, it has become attractive to study the \emph{inverse} problem:
where one starts with pairs from a given complementary sequence or family thereof and finds one or
more games which have pairs from those sequences as
$P$-positions. This has been discussed in \cite{fraenkel_new_2004},
\cite{fraenkel_gardner},
\cite{fraenkel_inverse},
\cite{fraenkelinvariance},
\cite{goldberg_extensions}, \cite{goldberg_rulesets}, and \cite{larssonweimer}.
A particular setting of this inverse problem was formalized by
Duch\^ene and Rigo in their conjecture, stated in
\cite{duchene_invariant_2010}.
\begin{conjecture}[Duch\^ene-Rigo]
Given a pair of complementary Beatty sequences $S = (A_n, B_n)_{n\geq 1}$,
there exists an invariant game having $S \cup {(0, 0)}$ as its set of
$P$-positions.
\end{conjecture}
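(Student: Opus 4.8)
\medskip
\noindent\textbf{Proof proposal.} The plan is to manufacture the required game directly from the sequences and then verify the two defining properties of a set of $P$-positions, using that such a game is acyclic with no ties, so that $P$ and $N$ partition its positions. An \emph{invariant} game is one whose legal moves, read as displacement vectors, form a set independent of the position---a subtraction game---so it is given by a symmetric $M\subseteq\mathbb{Z}_{\ge 0}^2\setminus\{(0,0)\}$, a move from $(x,y)$ subtracting some $m\in M$ whenever the result stays in the first quadrant; its $P$-positions then form a symmetric set. Identifying $(x,y)$ with $(y,x)$ (as one must, to make sense of the statement), it costs nothing to replace $S$ by $S\cup\{(B_n,A_n):n\ge 1\}$, still called $S$, and to seek $M$ whose game has $S\cup\{(0,0)\}$ as its set of $P$-positions. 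Since each move strictly decreases $x+y$, it suffices to find $M$ with (i)~\emph{stability}: $P-P'\notin M$ for all distinct $P,P'\in S\cup\{(0,0)\}$ with $P-P'\in\mathbb{Z}_{\ge 0}^2$; and (ii)~\emph{absorption}: for every $X\in\mathbb{Z}_{\ge 0}^2\setminus(S\cup\{(0,0)\})$ there is $P\in S\cup\{(0,0)\}$ with $X-P\in M$.

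The natural candidate is the largest $M$ meeting (i): put $D:=\{\,P-P'\;:\;P,P'\in S\cup\{(0,0)\},\ P\ne P',\ P-P'\in\mathbb{Z}_{\ge 0}^2\,\}$ and $M:=(\mathbb{Z}_{\ge 0}^2\setminus\{(0,0)\})\setminus D$. Then (i) holds by construction, and $M$ contains every Nim move $(k,0),(0,k)$: by Theorem~\ref{rayleigh} the sequences $A,B$ are strictly increasing and disjoint on the positive integers, so no element of $D$ has a zero coordinate (the differences against $(0,0)$ being the pairs of $S$ themselves, both coordinates positive). Hence $M$ presents a bona fide ``relaxed Wythoff'' game extending Nim, and the whole content of the conjecture, for this $M$, is the single assertion that $M$ is absorbing.

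Absorption is the crux. Writing $X=(x,y)$ with $x\le y$, one checks that unless $X=(A_n,A_m)$ with $n\le m$ and $A_m<B_n$---the one configuration from which no single Nim move reaches $S\cup\{(0,0)\}$---some single Nim move does (reducing one coordinate to the partner of the other, or to $0$). In the remaining case one must produce $P\in S\cup\{(0,0)\}$ with $P\le X$ componentwise and $X-P\notin D$, and here lies all the difficulty: the example $\alpha=\sqrt3$, where $c_1=c_2=1$, already shows that every ``local'' choice of $P$ may sit in $D$, so the working $P$ can be forced far below $X$, possibly $(0,0)$. Showing that an admissible $P$ always exists is an arithmetic statement about complementary Beatty sequences: the doubly indexed family $\{(A_k-A_\ell,B_k-B_\ell):k>\ell\ge 0\}$ and its three companion families (differences involving mirrored pairs), which together form $D$, must be shown sparse enough on each antidiagonal that $S$ stays reachable from its complement. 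I would prove this from the mex identity $A_n=\mex\{A_i,B_i:i<n\}$ and the ``same recurrence'' $B_n=A_n+c_n$, $c_n=[n\beta]-[n\alpha]$, together with Fraenkel-style covering relations among $A_n$, $B_n$, $A_{A_n}$ and $n$, splitting cases by which of $A,B$ contains each coordinate of $X$ and by the sign of $c_k-c_\ell$ for the competing targets; the hardest sub-case is exactly the one where $c$ fails to be strictly increasing, when two $P$-positions share an antidiagonal and the diagonal step between them must be barred.

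Since the definition of $M$ used nothing about $A,B$ beyond $1/\alpha+1/\beta=1$, carrying this out for an arbitrary complementary Beatty pair yields, in each case, an invariant game with set of $P$-positions exactly $S\cup\{(0,0)\}$---which is the conjecture. A second, more self-contained route---plausibly the one of the present paper---is to relinquish invariance altogether: prescribe a position-dependent ``relaxed'' diagonal move whose admissible imbalance at a position is read off from the sequence $c$, verify that the resulting game's $P$-positions obey Fraenkel's recurrence in \emph{every} setting, and then set $c_n=[n\beta]-[n\alpha]$ and invoke Theorem~\ref{rayleigh} to recover the prescribed pair---and, varying the slope, all such pairs simultaneously.
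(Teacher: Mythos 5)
There is a genuine gap, and it sits exactly where you say it does. Your construction (take the symmetric closure of $S$, let $D$ be the set of componentwise non-negative differences of elements of $S\cup\{(0,0)\}$, and let $M$ be the maximal stable move set, the complement of $D$) establishes stability trivially, but the conjecture \emph{is} the absorption statement: from every position outside $S\cup\{(0,0)\}$ some move of $M$ lands in $S\cup\{(0,0)\}$. You explicitly leave this unproved ("here lies all the difficulty\dots I would prove this from\dots"), offering only a proposed case split and an unsubstantiated heuristic that $D$ is "sparse enough on each antidiagonal." No argument is given for why an admissible target $P\le X$ with $X-P\notin D$ must exist, and your own example ($c_k-c_\ell$ behaving badly, forcing the target arbitrarily far below $X$) shows the local analysis you sketch does not obviously close. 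So the proposal is a plan, not a proof. Note also that the statement is not proved in this paper at all: it is quoted as a conjecture and attributed to Larsson, Hegarty and Fraenkel \cite{larsson_invariant_2011}, whose resolution does not take your "maximal complement of differences" route but the dual construction sketched in the introduction --- first compute the $P$-positions of an auxiliary game $\mathrm{G}^*$ whose moves are the pairs of $S$, then use those $P$-positions as the move set of $(\mathrm{G}^*)^*$; the absorption-type verification is carried out for that specific move set, which is precisely the step your outline postpones. (A posteriori their theorem implies your maximal $M$ also works, since any stable absorbing $M'$ satisfies $M'\subseteq M$ and absorption is monotone in the move set, but that is circular as a proof strategy.)

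Your closing alternative --- relinquish invariance and use a position-dependent relaxed diagonal move governed by $c_n=[n\beta]-[n\alpha]$ --- cannot prove this statement, because the Duch\^ene--Rigo conjecture requires an \emph{invariant} game. That route is essentially the paper's own contribution (Definition \ref{relaxedgame}, Theorem \ref{relaxed_solutions}, Corollary \ref{main_result}), but it answers Fraenkel's BIRS question about nice, not necessarily invariant, rules, which the paper is careful to distinguish from the conjecture you were asked to prove.
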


This conjecture was proven by Larsson, Hegarty, and Fraenkel in
\cite{larsson_invariant_2011}. Though the existence is indeed satisfactory
for the resolution of the conjecture, it was noted in \cite{chromatic},
\cite{goldberg_extensions}, \cite{goldberg_rulesets}, and \cite{larssonweimer}
that the general solution is complicated for an ordinary
person to play.

\begin{figure}[b]
\begin{tikzpicture}
\begin{axis}[
xmin=0,xmax=100,
ymin=0,ymax=100,
width=0.4\textwidth,
height=0.4\textwidth,
xtick style={draw=none}, ytick style={draw=none}]
\addplot[only marks, mark options={scale=0.2}] table {data-1.txt};
\addplot[blue, domain=0:100] {(100/(pi+25))*x};
\addplot[blue, domain=0:100] {x/ (100/(pi+25)) };
\end{axis}
\end{tikzpicture}
\begin{tikzpicture}
\begin{axis}[
xmin=0,xmax=400,
ymin=0,ymax=400,
width=0.4\textwidth,
height=0.4\textwidth,
xtick style={draw=none}, ytick style={draw=none}]
\addplot[only marks, mark options={scale=0.1}] table {data-1.txt};
\addplot[blue, domain=0:400] {(100/(pi+25))*x};
\addplot[blue, domain=0:400] {x/ (100/(pi+25)) };
\end{axis}
\end{tikzpicture}
\caption{Legal moves in $(\mathrm{G}^*)^*$ depicted as ordered pairs.
Lines with slopes $1/(\beta-1)$ and $\beta-1$.
Adapted with an alternate parameter from ``Wythoff's Star''
so-named in \cite{larsson_combinartorial}.}
\label{positionsfigure}
\end{figure}
For a basic
explanation: start with desired complementary sequences; the $P$-positions
of an auxiliary game $\mathrm{G}^*$ must first be determined to obtain the
set of legal moves for a second game $(\mathrm{G}^*)^*$.
Only does this second game have $P$-positions
which are the desired complementary sequences. This means a player solving
the inverse problem accumulates additional information to obtain their
desired $P$-positions. Figure \ref{positionsfigure}
depicts valid moves for the second game when the known complementary sequences
are the complementary Beatty sequences with parameters $\alpha=5/4+\pi/100$
and $\beta$ satisfying $1/\alpha+1/\beta=1$. The complicated structure in the
figure indicates the player, in fact, accumulates much additional
information.

As such, an inverse problem without the requirement of invariance and
with the additional requirement of having
``nice'' rules has been formulated since then. Indeed, it was posed by Fraenkel
at the 2011 BIRS workshop in combinatorial games
\begin{quote}
``Find nice rules for a 2-player combinatorial game for which the $P$-positions
are obtained from a pair of complementary Beatty sequences,'' \cite{chromatic}
\end{quote}
In this paper, we are concerned with Fraenkel's
inverse problem, which has led to the
games of \cite{chromatic}, \cite{fraenkelinvariance},
\cite{larssonweimer} and others.

The main results of this paper are two-fold. First, an extension of the
game family of \cite{fraenkel_new_2004} is presented and its Beatty
$P$-positions are explicated, from which we recover an \emph{uncountable}
family of complementary Beatty sequences. Inspired by the derivation of the
previous result, a novel game family
is introduced with simple play rules
for which \emph{any} complementary Beatty sequences
are the $P$-positions of some game in the family, solving the inverse problem posed
by Fraenkel.

This paper is structured as follows. First a study of Definition \ref{general}
is made, determining necessary and sufficient condition under which games in 
this family  have $P$-positions which are complementary Beatty sequences.
Games in the 
family of Definition \ref{modifiedgame} are introduced and Theorem 
\ref{mastertheorem_expanded} indicates a sufficient criterion for when
one of these games 
has $P$-positions arising in terms of a distinguished recurrence formula. 
Theorem \ref{beattymodified2pilesaregolden} provides necessary and sufficient 
conditions so that a game in this family
has $P$-positions which are complementary Beatty sequences. Definition
\ref{relaxedgame} is presented and
the $P$-positions of games in this family are computed by Theorem 
\ref{relaxed_solutions}. Corollary \ref{main_result} states that any desired 
pair of complementary Beatty sequences arises as the $P$-positions of some
game in the family of Definition \ref{relaxedgame}.

\section{General 2-pile subtraction games}
In \cite{fraenkel_new_2004}, an 
infinite family of Wythoff's game variations is defined, again altering only the 
diagonal move. A game in this family is parameterized by a suitable function 
$f$ of the present and next game positions in the following sense: a player who 
removes $k$ and $\ell$ tokens from both piles may do so if and only if 
$|k-\ell|$ is bounded by $f$ (made precise in the following definition). 
Setting $f\equiv 1$ or $f\equiv t$ for some integer $t>1$ retrieve the special 
cases of the original Wythoff's game and $t$-Wythoff, respectively. Precisely,
the family is introduced as
\begin{definition}[General 2-pile subtraction games]\label{general} 
Given two piles of tokens $(x,y)$ of sizes $x,y$, with $0\leq x\leq y<\infty$. 
Two players alternate removing tokens from the piles:
\begin{enumerate}
\item[(aa)] Remove any positive number of tokens from a single pile, possibly
	the entire pile.
\item[(bb)] Remove a positive number of tokens from each pile by sending $(x_0,y_0)$
	to $(x_1,y_1)$ where
\begin{align}
|(y_0-y_1)-(x_0-x_1)| < f(x_1,y_1,x_0),
\end{align}
\end{enumerate}
where the constraint function $f(x_1,y_1,x_0)$ is integer-valued, positive, 
monotone, and semi-additive.
\end{definition}
The following theorem of the same paper determines the $P$-positions of any
game defined above.
\begin{theorem}[Fraenkel] \label{mastertheorem}
Let $\mathcal{S}=\{(a_n, b_n)\}_{n=0}^\infty$ where $a_0=b_0=0$ and
for all $n\in\mathbb{Z}_{>0}$,
\begin{align}
a_n &= \operatorname{mex}\{ a_k, b_k \;|\; k < n\}\\
b_n &= f(a_{n-1}, b_{n-1}, a_n) + b_{n-1} + a_n - a_{n-1}. \label{recurrence}
\end{align}
If $f$ is positive, monotone, and semi-additive, then $\mathcal{S}$ is the set 
of $P$-positions of a general 2-pile subtraction game with constraint function 
$f$.
\end{theorem}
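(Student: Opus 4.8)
The plan is to identify $\mathcal S$ with the set $\mathcal P$ of $P$-positions through the standard backward-induction criterion. Since every legal move strictly decreases the total number of tokens, the game tree rooted at any position is finite, so it suffices to check three things: that the terminal position $(0,0)=(a_0,b_0)$ lies in $\mathcal S$; that \emph{(i)} no legal move takes a position of $\mathcal S$ to another position of $\mathcal S$; and that \emph{(ii)} from every position $(x,y)\notin\mathcal S$ there is a legal move landing in $\mathcal S$. A routine induction on the length of the longest play from a position then yields $\mathcal S=\mathcal P$.

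Before treating \emph{(i)} and \emph{(ii)} I would establish the basic structure of $\mathcal S$. From the $\operatorname{mex}$ definition, $(a_n)$ is strictly increasing and $a_n\geq n$. From \eqref{recurrence} together with $f\geq 1$ one gets $b_n-b_{n-1}=f(a_{n-1},b_{n-1},a_n)+(a_n-a_{n-1})\geq 2$, so $(b_n)$ is strictly increasing; a short induction then gives $a_n\leq b_n$ with equality only at $n=0$. Because $a_n$ is always the \emph{least} integer missing from $\{a_k,b_k:k<n\}$, every integer $z$ that is not some $a_n$ must occur as some $b_k$ (take the least $n$ with $a_n>z$; then $z<a_n$ forces $z\in\{a_k,b_k:k<n\}$), so $\{a_n\}_{n\geq0}$ and $\{b_n\}_{n\geq1}$ partition $\mathbb Z_{\geq0}$; in particular no integer is a coordinate of two different pairs of $\mathcal S$, and the gaps of $(a_n)$ are all $1$ or $2$. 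I would also record the telescoped form of \eqref{recurrence},
\begin{align}
(b_n-b_m)-(a_n-a_m)=\sum_{k=m+1}^{n}f(a_{k-1},b_{k-1},a_k)\qquad(m<n),\label{eq:telescope}
\end{align}
which drives the whole argument.

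For \emph{(i)}: a Nim move from $(a_n,b_n)$ alters exactly one coordinate to a new value, and since no integer is a coordinate of two pairs of $\mathcal S$ (and $a_n\neq 0$, and no $b_k$ equals any $a_\ell$ for $n,\ell\geq1$), none of the re-sorted outcomes $(a_n-j,b_n)$, $(a_n,b_n-j)$, $(b_n-j,a_n)$, $(a_n,a_n)$ can lie in $\mathcal S$. A diagonal move from $(a_n,b_n)$ to a hypothetical $(a_m,b_m)\in\mathcal S$ forces $a_m<a_n$, hence $m<n$, and its two removals differ by $(b_n-b_m)-(a_n-a_m)$, which by \eqref{eq:telescope} equals $\sum_{k=m+1}^n f(a_{k-1},b_{k-1},a_k)$; monotonicity and semi-additivity of $f$ make this at least $f(a_m,b_m,a_n)$, so the constraint $|\,\cdot\,|<f(a_m,b_m,a_n)$ fails. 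This is the one place the hypotheses on $f$ are used in this direction, and it is the cleanest step.

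The work is in \emph{(ii)}. Given $(x,y)\notin\mathcal S$ with $x\leq y$, the partition puts $x$ in exactly one of the two sequences, and I would run the cases. If $x=a_i$ with $y>b_i$, one Nim move on the larger pile reaches $(a_i,b_i)$; if $x=a_i$ with $y=b_j$ and $j<i$, one Nim move on the smaller pile reaches $(a_j,b_j)$; if $x=a_i=y$, a diagonal move emptying both piles reaches $(0,0)$; and every case with $x=b_i$ reduces to a single Nim move to $(a_i,b_i)$. The only case left is $(a_i,a_j)$ with $1\leq i<j$ and $a_j<b_i$, where no Nim move works; here I would produce a diagonal move to some $(a_m,b_m)$, choosing $m$ maximal with $b_m<a_j$ (so $m<i$ and $b_m<a_j<b_{m+1}$) and removing $a_i-a_m$ from the first pile and $a_j-b_m$ from the second. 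The required bound is $|(a_j-b_m)-(a_i-a_m)|<f(a_m,b_m,a_i)$: the upper side follows quickly from $a_j<b_{m+1}$, the defining relation for $b_{m+1}-a_{m+1}$, and monotonicity; the lower side is the delicate part, since naively $b_m-a_m$ can exceed $f(a_m,b_m,a_i)$, so one must exploit that the choice of $m$ pins $b_m$ to within $b_{m+1}-b_m$ of $a_j$ — i.e. the Beatty-type inversion between the two complementary sequences — before invoking semi-additivity of $f$ once more. I expect this constraint verification in the $(a_i,a_j)$ case to be the main obstacle; the remaining cases are bookkeeping with the $\operatorname{mex}$/complementarity structure.
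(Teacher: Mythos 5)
Your overall scheme (show $(0,0)\in\mathcal S$, no legal move joins two members of $\mathcal S$, and every position outside $\mathcal S$ has a move into $\mathcal S$) is the standard and correct one, and your step \emph{(i)}, via the telescoped identity and monotonicity/semi-additivity, is exactly how those hypotheses are meant to enter; note that the paper itself does not reprove this theorem (it quotes Fraenkel), and its nearest in-house analogue, Lemma \ref{doublemex}, avoids your step \emph{(ii)} altogether by \emph{defining} $b_n$ as the minimal safe value. The genuine gap is precisely where you flag it, in the case $(a_i,a_j)$ with $i<j$ and $a_j<b_i$: you never establish the lower inequality, and in fact your choice of target, ``$m$ maximal with $b_m<a_j$,'' is wrong. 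Take $f\equiv 1$ (classical Wythoff, where the recurrence yields $a_n=[n\phi]$, $b_n=[n\phi^2]$) and the $N$-position $(a_4,a_5)=(6,8)$, so $a_5=8<b_4=10$. Your rule picks $m=3$ because $b_3=7<8<b_4=10$, and the prescribed move removes $a_4-a_3=2$ and $a_5-b_3=1$; but $|1-2|=1\not< f(a_3,b_3,a_4)=1$, so this move is illegal, whereas the actual winning move is to $(a_2,b_2)=(3,5)$ with equal removals. So the step is not merely unfinished: the construction it rests on fails.

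The repair is the classical Wythoff--Fraenkel device: select $m$ by the \emph{difference}, not by the size of $b_m$. Since each step of \eqref{recurrence} adds $f\geq 1$, the sequence $b_k-a_k$ is strictly increasing, and $d:=a_j-a_i$ satisfies $1\le d<b_i-a_i$ because $a_j<b_i$; take $m$ maximal with $b_m-a_m\le d$. Then $m<i$, both removals $a_i-a_m$ and $a_j-b_m=(a_i-a_m)+\bigl(d-(b_m-a_m)\bigr)$ are positive, and
\begin{align*}
0\;\le\;(a_j-b_m)-(a_i-a_m)\;=\;d-(b_m-a_m)\;<\;(b_{m+1}-a_{m+1})-(b_m-a_m)\;=\;f(a_m,b_m,a_{m+1})\;\le\;f(a_m,b_m,a_i),
\end{align*}
the last inequality by monotonicity in the final argument (using $a_{m+1}\le a_i$), so the diagonal move to $(a_m,b_m)$ is legal; no ``Beatty-type inversion'' is needed. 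A smaller omission in your step \emph{(i)}: a diagonal move can also reach $(a_m,b_m)$ in crossed fashion (first pile down to $b_m$, second down to $a_m$), where the removal difference is $(b_n-a_n)+(b_m-a_m)$; this is at least your uncrossed quantity, so the same bound excludes it, but it should be stated.
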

One can see (proven in Lemma \ref{fraenkelcandidatelemma}) that the function
defined by
\begin{align}
f(x_1,y_1,[n\alpha]) = ([n\beta]-y_1)-([n\alpha]-x_1)
\label{fraenkelcandidate}
\end{align}
has $P$-positions which are pairs of the complementary Beatty sequences
parameterized by $\alpha$. However, this function is many-valued because, in
particular, it depends on the position moved to as well as from.
The next section gives a way to tell how many-valued the function is.

\section{Variance, Invariance, and $k$-Invariance}
There exists a classification of subtraction games according
how a player's allowed moves depend on the game state. In an
\emph{invariant} subtraction game, the move $(x,y)\to(x-k,y-\ell)$ is
allowed if and only
if for all positions $(x_0,y_0)$, the move
$(x_0,y_0)\to(x_0-k,y_0-\ell)$ is allowed (provided the pair is
non-negative in both components.) Subtraction games not satisfying this
property are called \emph{variant}. The following definition 
introduced in \cite{fraenkelinvariance} is weaker
than invariance:
\begin{definition}[$k$-Invariance]
A subtraction game is called $k$-invariant if its set of
positions can be partitioned into $k$ disjoint subsets such that, within
a subset, each allowed move is invariant in that subset.
 \end{definition}
The game played with the function in Equation \eqref{fraenkelcandidate} is not
$k$-invariant for any $k\in\mathbb{Z}_{>0}$, because the number of distinct values
of $f$ increases unboundedly. If the $P$-positions $\{(a_n,b_n)\}$ are known
and the conclusion of Theorem \ref{mastertheorem} holds,
one can see that a minimal requirement for such a function $f$ is that
\begin{align}
f(a_{n-1},b_{n-1},a_n)=(b_n-a_n)-(b_{n-1}-a_{n-1}).
\end{align}
As was observed in Example 3 of \cite{fraenkelinvariance},
this expression achieves at most three values when sequences $\{a_n\}$
and $\{b_n\}$ are complementary Beatty sequences. Therefore, if a constraint function depends
only of the position moved \emph{from}, then there is a chance to bound the
number of distinct values of $f$. For example, consider the following family satisfying
the minimal requirement:
\begin{definition}[Beatty constraint function]\label{beatty_constraint_function}
Let $1<\alpha<2$ be irrational. If $\beta$ satisfies $1/\alpha+1/\beta=1$, then
\begin{align}
f([n\alpha]) = f([n\beta]) = ([n\beta]-[(n-1)\beta])-([n\alpha]-[(n-1)\alpha]).
\label{modified_beatty_constraint_function}
\end{align}
is said to be the Beatty constraint function parameterized by $\alpha$.
\end{definition}
Since this expression achieves at most three values, we can deduce
that a Beatty constraint function induces a game which is $k$-invariant
for some $k\leq 3$.
For this reason, games played with these functions are the
object of central study in this paper.
The next lemma tells us when a Beatty constraint function may be used to play
a game of Definition \ref{general} by checking that it satisfies
the hypotheses required by that definition. When such a function may be used,
the $P$-positions are known
to be the complementary Beatty sequences parameterized by $\alpha$, according
to Theorem \ref{mastertheorem}.
\begin{lemma}\label{h_function_lemma}
Let irrationals $1<\alpha<2<\beta$ satisfy
$1/\alpha+1/\beta=1$. Define the sequence
\begin{align}
\label{deltasquare}
\Delta_{n-1}^2 = ([n\beta] - [(n-1)\beta]) - ([n\alpha]-[(n-1)\alpha]).
\end{align}
The following are equivalent:
\begin{enumerate}[(i)]
\item $\Delta^2$ is monotone
\item $\Delta^2$ is constant and equals $[\beta]-1$
\item $\alpha=\goldenformula$ where $t=[\beta]-1$.
\end{enumerate}
If one of the above is false, then the range of $\Delta^2$
lies within a subset of $\{[\beta]-2,[\beta]-1,[\beta]\}$.
\end{lemma}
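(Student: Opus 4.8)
The plan is to prove the cycle $(i)\Rightarrow(iii)\Rightarrow(ii)\Rightarrow(i)$, and to dispose of the concluding sentence first as a free-standing observation. The starting point is an elementary bound: since $1<\alpha<2$ we have $[n\alpha]-[(n-1)\alpha]\in\{1,2\}$ for every $n\geq 1$, and since $[\beta]<\beta<[\beta]+1$ we have $[n\beta]-[(n-1)\beta]\in\{[\beta],[\beta]+1\}$; subtracting, $\Delta^2_{n-1}\in\{[\beta]-2,[\beta]-1,[\beta]\}$ for all $n$. This already proves the last sentence of the lemma (indeed it needs no hypothesis at all), and — crucially for what follows — it exhibits $\Delta^2$ as a bounded integer sequence.

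Next I would handle $(iii)\Rightarrow(ii)$ and $(ii)\Rightarrow(i)$. If $\alpha=\goldenformula$ with $t=[\beta]-1$, then $t>0$ and Lemma \ref{goldenfamilylemma} gives $\beta=\alpha+t$, so $[n\beta]=[n\alpha+nt]=[n\alpha]+nt$ and hence $[n\beta]-[n\alpha]=nt$; differencing in $n$ yields $\Delta^2_{n-1}=nt-(n-1)t=t=[\beta]-1$ for all $n$, which is $(ii)$. That $(ii)$ implies $(i)$ is immediate, since a constant sequence is monotone.

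The substance of the lemma is $(i)\Rightarrow(iii)$. Here the key move is that a monotone sequence of integers that is bounded — which $\Delta^2$ is, by the first paragraph — must be eventually constant, say $\Delta^2_{n-1}=v$ for all $n\geq N_0$. Telescoping, $[n\beta]-[n\alpha]-vn$ is then constant, equal to some fixed integer $D$, for $n\geq N_0$. Writing $[n\beta]-[n\alpha]=n(\beta-\alpha)+\{n\alpha\}-\{n\beta\}$ gives the identity $n(\beta-\alpha-v)=D+\{n\beta\}-\{n\alpha\}$, whose right-hand side stays in $(D-1,D+1)$ while the left-hand side is linear in $n$; letting $n\to\infty$ forces $\beta-\alpha=v\in\mathbb{Z}$, and $v=\beta-\alpha>2-\alpha>0$ forces $v\geq 1$. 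Substituting $\beta=\alpha+v$ into $\conjugate$ yields $\alpha^2+(v-2)\alpha-v=0$, whose positive root is exactly $\alpha=\goldenformula$ with $t=v$; and since $\alpha\in(1,2)$ we get $[\beta]=[\alpha+v]=v+1$, i.e. $v=[\beta]-1$. This is $(iii)$, and chaining the three implications gives the equivalence (in particular upgrading the "eventually constant" obtained here to the genuine constancy asserted in $(ii)$).

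I expect the only genuinely delicate point to be this one passage inside $(i)\Rightarrow(iii)$: combining boundedness of $\Delta^2$ with monotonicity to get eventual constancy, and then using the boundedness of the fractional-part correction $\{n\alpha\}-\{n\beta\}$ to conclude that $\beta-\alpha$ is an integer. Everything else — the elementary difference bound, the appeal to Lemma \ref{goldenfamilylemma}, and solving the resulting quadratic (which is the very quadratic behind that lemma) — is routine; the one bookkeeping point that must not be skipped is checking $v\geq 1$, so that the root formula actually lands in the stated family of irrationals.
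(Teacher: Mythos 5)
Your proposal is correct, and it takes a genuinely different route from the paper's proof. The paper proves (i) $\Rightarrow$ (ii) directly: it writes $\Delta^2_{n-1}=[\beta]-1+d_\alpha(n-1)$ with $d_\alpha=g_{1/\{\beta\}}-g_{1/\{\alpha\}}$, reads the $g$'s as indicator functions of the Beatty sets $X=\{[n/\{\alpha\}]\}$ and $Y=\{[n/\{\beta\}]\}$, and invokes Niven's Theorem 3.11 (a non-trivial intersection of Beatty sequences is infinite) to force a monotone $d_\alpha$ to be constant; it then excludes $d_\alpha\equiv\pm1$, and gets (iii) from $X=Y$, hence $\{\alpha\}=\{\beta\}$, and the quadratic. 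You instead close the cycle (i) $\Rightarrow$ (iii) $\Rightarrow$ (ii) $\Rightarrow$ (i): your elementary bound $\Delta^2_{n-1}\in\{[\beta]-2,[\beta]-1,[\beta]\}$ (which also settles the lemma's final sentence unconditionally) makes $\Delta^2$ a bounded monotone integer sequence, hence eventually constant with value $v$; telescoping together with $[n\beta]-[n\alpha]=n(\beta-\alpha)+\{n\alpha\}-\{n\beta\}$ forces $\beta-\alpha=v\in\mathbb{Z}_{\geq 1}$, the quadratic gives $\alpha=\goldenformula$ with $t=v=[\beta]-1$, and Lemma \ref{goldenfamilylemma} then upgrades eventual constancy to the genuine constancy of (ii). Your route is more elementary---no appeal to Niven and no $X$/$Y$ case analysis---and the observation that eventual constancy suffices because the cycle closes is a clean logical economy; the checks that matter ($v\geq 1$, the positive root, $[\alpha+v]=v+1$) are all present. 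What the paper's argument buys in exchange is the explicit trichotomy \eqref{trichotomy}, which locates the pre-image of each value $[\beta]-2$, $[\beta]-1$, $[\beta]$ inside $X$ and $Y$; that finer information is reused downstream (in the $k$-invariance discussion, Theorem \ref{beattymodified2pilesaregolden}, and the gap lemma), whereas your proof delivers the possible values of $\Delta^2$ but not where they occur.
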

\begin{proof}
The implication (ii) $\implies$ (i) is trivial, so let us prove
(i) $\implies$ (ii). Afterwards, we prove (ii) $\iff$ (iii). Suppose
the function is monotone.
Applying the decomposition $\beta=[\beta] + \{\beta\}$, we can rewrite
\begin{align}
[n\beta] &= [n([\beta]+\{\beta\})]\\
&= [n[\beta]+n\{\beta\}]\\
&= n[\beta]+[n\{\beta\}].
\end{align}
The first difference in parentheses in Equation \eqref{deltasquare} becomes
\begin{align}
[n\beta] - [(n-1)\beta] &= n[\beta] + [n\{\beta\}]
- ((n-1)[\beta]+[(n-1)\{\beta\}])\\
&= [\beta] + g_{1/\{\beta\}}(n-1).
\end{align}
where $g_{1/\rho}(n)=[(n+1)\rho]-[n\rho]$ for real $\rho$. After deriving a 
similar expression for the second difference in parentheses in Equation
\eqref{deltasquare} involving $\alpha$ we have,
\begin{align}
\Delta^2=[\beta]-[\alpha] + g_{1/\{\beta\}}(n-1) - g_{1/\{\alpha\}}(n-1)
= [\beta]-1 + d_\alpha(n-1), \label{reconstruction}
\end{align}
where the term
\begin{align}
d_\alpha(n) = g_{1/\{\beta\}}(n) - g_{1/\{\alpha\}}(n) \label{difference}
\end{align}
is the sole part of the expression which may vary. To determine precisely
 when $d_\alpha$ is monotone, start by defining the sets 
$X=\{[n/\{\alpha\}]\}_{n\geq 0}$ and $Y=\{[n/\{\beta\}]\}_{n\geq 0}$. From 
Chapter 9 of \cite{allouche_shallit_2003}, recall that
\begin{align}
g_{1/\{\alpha\}}(n) = \begin{cases}
1 \quad n \in X\\
0 \quad n \notin X
\end{cases} \qquad  g_{1/\{\beta\}}(n) = \begin{cases}
1 \quad n \in Y\\
0 \quad n \notin Y.
\end{cases}
\end{align}
Studying the possible cases, one obtains that $d_\alpha$
and $\Delta^2$ achieve at most three of the following values:
\begin{align}
d_\alpha(n) &=\! \begin{cases}
0 & n \in (X\cap Y)\!\cup\! (X^c\cap Y^c)\\
1 & n\in X^c\cap Y\\
-1 & n\in X\cap Y^c.
\end{cases} \!\!\implies \Delta^2_n \!=\! \begin{cases}
[\beta]-1\\
[\beta]\\
[\beta]-2.
\end{cases}
\label{trichotomy}
\end{align}
Theorem 3.11 of \cite{niven} states that if an
intersection of Beatty sequences is
non-empty and non-zero, then
the intersection in fact has infinite size.
This means if 
$d_\alpha$ achieves any value, then it achieves that value infinitely many 
times, indicating the function $d_\alpha$ oscillates if it achieves
more than one value. 
Therefore, if $d_\alpha$ is monotone, then it must achieve only one value. 
There are now three possibilities to examine, and we rule out the second two: 
that $d_\alpha\equiv 1$ or $d_\alpha\equiv -1$.

If $d_\alpha\equiv 1$, then $X^c\cap Y=\mathbb{Z}_{>0}$, so that $X=\emptyset$ 
and $Y=\mathbb{Z}_{\geq 0}$, a contradiction. A symmetric argument can be 
applied to eliminate the case $d_\alpha\equiv-1$, leaving possible only the 
remaining case $d_\alpha\equiv 0$, from which it follows that 
$\Delta^2_{n-1}\equiv[\beta]-1$.

To finish, a formula for $\alpha$ is derived, also providing a converse to
Lemma \ref{goldenfamilylemma}. 
The argument in the previous paragraph shows $(X\cap Y)\cup (X^c\cap Y^c)=\mathbb{Z}_{\ge 0}$. Since both $X$ and $Y$ are non-empty, this
implies $X=Y$. Proceed by
elementary algebra. Let $t=[\beta]-1$. Because 
$\{\alpha\}=\alpha-[\alpha]$ and $\{\beta\}=\beta-[\beta]$, it follows 
that
\begin{align}
\alpha - [\alpha] = \beta - [\beta]\text,
\end{align}
Substitute the values $\beta=\alpha/(\alpha-1)$, $[\alpha]=1$, and $[\beta]=t+1$
to obtain the equation
\begin{align}
\alpha - 1 &= \frac{\alpha}{\alpha-1} - (t+1)
\end{align}
which can be re-arranged into a quadratic equation.
The positive solution from the quadratic formula equals
\begin{align}
\alpha = \frac{2 - t + \sqrt{t^2+4}}{2},
\end{align}
completing the proof.
\end{proof}
\begin{remark}
The previous proof indicates that $\Delta_{n-1}^2\geq 0$,
for all $n$ because
\begin{align}
\Delta^2 &\geq [\beta]-2\geq 2-2=0.
\end{align}
\end{remark}
The preceding lemma and its proof constitute a refinement of Proposition 1
in \cite{fraenkelinvariance}, which studies the same difference,
by additionally finding the pre-image of each value
$[\beta]-2$,$[\beta]-1$, and $[\beta]$. The paper \cite{larssonweimer} also
makes heavy study of this difference in its Section 3.
The lemma (and the preceding remark) will be used in the next section also,
particularly in the proof of Theorem \ref{beattymodified2pilesaregolden}.
The preceding lemma also showed that a game played with a Beatty
constraint function $f$ is either invariant, 2-invariant, or 3-invariant.
In the latter cases, the game positions decompose into the disjoint union
\begin{align}
&\{(a_n,y), (b_n,y) \;|\;  n-1\in (X\cap Y)\cup(X^c\cap Y^c)\}\cup\\
&\{(a_n,y),(b_n,y) \;|\;  n-1\in X^c\cap Y\}\cup\\
&\{(a_n,y),(b_n,y) \;|\;  n-1\in X\cap Y^c\}.
\end{align}
where if the game is $2$-invariant, one of the preceding sets is empty.
In the next section, we expand Theorem \ref{mastertheorem} to be compatible
with non-monotone constraint functions. That some \emph{non-monotone} boolean
constraint function enjoys the conclusion of Theorem
\ref{mastertheorem} was observed in Section 4 of \cite{fraenkelboole}.
That the same conclusion is enjoyed by certain Beatty constraint functions
is what Section \ref{inverseproblem} is devoted to.

\section{Modified 2-pile subtraction games}
By the preceding discussion, it is of interest to consider a game
family including constraint functions lacking monotonicity.
Continuing to require $f\geq 1$
is mandated by a complementary setting, for if $f$ is vanishing, then
certain Wythoff moves are blocked entailing that $P$-positions may not be complementary.
This can be found in Table 6\footnote{This
example is presented by its author as a counterexample, but it appears to
instead be a genuine example. The main lemma of this section can be used to
prove this.} of \cite{fraenkel_new_2004}, where a vanishing constraint function
has the $P$-position $(1,1)$. By the proof of Lemma \ref{doublemex}, a game
of Definition \ref{modifiedgame} will have the $P$-position $(a_n,a_n)$ if and only
if $f(a_k,b_k,a_n)=0$ for all $k<n$.
If the constraint function depends only on the position moved from, the
latter condition is simply $f(a_n)=0$.

The main result of this section is a classification of the $P$-positions
of games in Definition \ref{modifiedgame}
which are pairs of complementary Beatty sequences. To remove the built-in
compatibility of Definition \ref{general} with Theorem \ref{mastertheorem},
we will work with the following game family, particularly with functions
in the family of Definition \ref{beatty_constraint_function}.
\begin{definition}[Modified 2-pile subtraction games]\label{modifiedgame}
Given two piles of tokens $(x,y)$ of sizes $x,y$, with $0\leq x\leq y<\infty$. 
Two players alternate, removing tokens from the piles as follows
\begin{enumerate}
\item[(aa)] Remove any positive number of tokens from a single pile,
    possibly the entire pile.
\item[(bb)] Remove a positive number of tokens from each pile by moving from
    $(x_0,y_0)$ to $(x_1,y_1)$ where
\begin{align}
|(y_0-y_1)-(x_0-x_1)| < f(x_1,y_1,x_0),
\end{align}
\end{enumerate}
where the constraint function $f$ is integer-valued.
\end{definition}
\begin{remark}\label{dependence}
Because we are interested in $k$-invariant games
(as discussed in the previous section), constraint functions
which depend only on the position moved
\emph{from} are prioritized. For this reason, it is understood (unless otherwise specified)
that a constraint function $f$ in this family satisfies $f(x_1,y_1,x_0)=f(x_0)$.
Setting the next lemma within this definition is convenient because
it permits two motivating examples at the end of the section.
\end{remark}
\begin{remark}
For the sake of consistent equation indexing, the following recurrence
starting with $(a_0,b_0)=(0,0)$ and proceeding by
\begin{align}
\begin{cases}
a_n = \mex\{a_k, b_k\;|\; k < n\}\\
b_n = f(a_{n-1},b_{n-1},a_n) + b_{n-1} + a_n-a_{n-1}
\end{cases} \text{for all $n>0$} \label{modified_recurrence}
\end{align}
is the recurrence of Equation \eqref{recurrence} adapted to the games
of Definition \ref{modifiedgame}.
\end{remark}

\begin{figure}[b]
{\small
\begin{tabular}{c|c|c|c|c|c|c|c|c|c|c}
$n$  &  0 & 1 & 2 & 3 & 4 & 5 & 6 & 7 & 8 & 9\\ \hline
$a_n$ & 0 & 1 & 2 & 4 & 7 & 8 & 9 & 10 & 11 & 12\\
$b_n$ & 0 & 3 & 6 & 5 & 13 & 16 & 19 & 15 & 23 & 26\\ \hline
$[n\alpha]$ & 0 & 1 & 2 & 4 & 5 & 7 & 8 & 10 & 11 & 13\\
$[n\beta]$ & 0 & 3 & 6 & 9 & 12 & 16 & 19 & 22 & 25 & 29
\end{tabular}
}
\caption{Top: $P$-positions of the game played in Definition \ref{modifiedgame}
with the constraint function in Equation 
\eqref{modified_beatty_constraint_function}
parameterized
by $\alpha=1+\sqrt{5}/5$. Bottom: Complementary Beatty sequences parameterized by $\alpha$.
Disagreement at $n=3$. }
\label{figure1}
\end{figure}

By playing a game in Definition \ref{modifiedgame} with the Beatty constraint
function parameterized by
$\alpha=1+\sqrt{5}/5$, one can see
the $P$-positions do not coincide with pairs of the 
recurrence in Equation \eqref{modified_recurrence} (see Figure \ref{figure1}
for a list of $P$-positions
and recurrence pairs written as Beatty pairs).
One could have decided that the failure of Theorem
\ref{mastertheorem} to apply is because the constraint function was
non-monotone, but this assumption is not necessary
(except that it was required by Definition 
\ref{general}).
Indeed, consider the non-monotone Beatty constraint function
with parameter $\alpha=1+(2\sqrt{19}-8)/2$. Playing a game with this
constraint function, the $P$-positions can be 
verified to coincide with pairs of the recurrence in Equation
\eqref{modified_recurrence}. See Figure \ref{figure2}
for several of the first $P$-positions.

\begin{figure}[t]
{\small
\begin{tabular}{c|c|c|c|c|c|c|c|c|c|c}
$n$  &  0 & 1 & 2 & 3 & 4 & 5 & 6 & 7 & 8 & 9\\ \hline
$a_n$ & 0 & 1 & 2 & 4 & 5 & 6 & 8 & 9 & 10 & 12\\
$b_n$ & 0 & 3 & 7 & 11 & 15 & 18 & 22 & 26 & 30 & 34\\ \hline
$[n\alpha]$ & 0 & 1 & 2 & 4 & 5 & 6 & 8 & 9 & 10 & 12\\
$[n\beta]$ & 0 & 3 & 7 & 11 & 15 & 18 & 22 & 26 & 30 & 34
\end{tabular}
}
\caption{Top: $P$-positions of the game played in
Definition \ref{modifiedgame}
with the constraint function in Equation
\eqref{modified_beatty_constraint_function} parameterized
by $\alpha=1 + (2\sqrt{19}-8)/2$. Bottom: Complementary
Beatty sequences parameterized by $\alpha$.}
\label{figure2}
\end{figure}

In the next lemma, we solve the forward problem of finding the $P$-positions
of a particular game in the family of Definition \ref{modifiedgame},
allowing us to designate our specific \emph{inverse problem} in the next
section.

Now that $f$ has been assumed to be merely integer-valued, the next lemma
is a generalization of Theorem \ref{mastertheorem}.
Algorithms like the one exposed in the statement and
proof of this lemma are know as Minimum
Excluded algorithms and variations of this proof strategy have their origins
in Wythoff's original paper 
 \cite{wythoffwisdom} \cite{wythoff}. Contemporary references include 
\cite{duchene_invariant_2010}, \cite{fraenkel_threefronts},
\cite{fraenkel_new_2004}, \cite{larsson_restrictions}, and many others.
Later on, we adapt the lemma to analyze 
the game in Definition \ref{relaxedgame}, exposing its $P$-positions
in Theorem \ref{relaxed_solutions}.

The statement of this lemma shows how to compute the set of $P$-positions.
The proof shows that a player in a $P$-position may never move to
a lower $P$-position, and that a player in an $N$-position may always move
to a $P$-position, so that we may apply the $P$- and $N$-position structure
theorem.
\begin{lemma}\label{doublemex}
Let $\mathcal{S}=\{(a_n,b_n)\}_{n=0}^\infty$ where $a_0=b_0=0$
and for all $n \in \mathbb{Z}_{>0}$
\begin{align}
a_n &= \mex\{a_k, b_k \;|\; k < n\}\\
b_n &= \min\{b \geq a_n \;|\; b\!\neq\!b_k \;\text{and}\;
    |(b\!-\!b_k)\!-\!(a_n\!\!-\!a_k)|  \!\geq\! f(a_k,b_k,a_n)\forall k\!<\!n
    \}\label{desiredb}.
\end{align}
Then $\mathcal{S} = \{(a_n, b_n)\}_{n=0}^\infty$ is the set of
$P$-positions of a game in Definition \ref{modifiedgame} with constraint function $f$.

\end{lemma}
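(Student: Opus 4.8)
The plan is to verify the standard two-part characterization of $P$-positions for an acyclic combinatorial game with no ties: (1) from every position in $\mathcal{S}$, every legal move leads outside $\mathcal{S}$; and (2) from every position not in $\mathcal{S}$, some legal move leads into $\mathcal{S}$. Since $\mathcal{S}$ contains exactly one position $(a_n, b_n)$ (and its mirror image $(b_n, a_n)$) for each $n$, and the game is acyclic with no ties, these two facts together with the structure theorem cited as \cite{siegel} force $\mathcal{S} = \mathcal{P}$. Throughout I would adopt the convention $a_n \le b_n$ and work with unordered positions, so that a ``position in $\mathcal{S}$'' means $(a_n, b_n)$ for some $n$.

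First I would record the basic combinatorial facts about the sequences. The $\mex$ clause makes $\{a_n\}$ strictly increasing, and a routine induction shows the sets $\{a_k : k \le n\}$ and $\{b_k : k \le n\}$ are disjoint except for the shared $0$, with $\{a_k\} \cup \{b_k\}$ covering an initial segment of $\mathbb{Z}_{\ge 0}$ in the usual Wythoff fashion; in particular $a_n \le b_n$ and the $b_n$ are pairwise distinct by construction (the condition $b \ne b_k$ is explicitly imposed in \eqref{desiredb}). I would also note that the minimum defining $b_n$ is always attained: the constraints exclude only finitely many values below any given bound (for each $k<n$ the set of $b$ with $|(b-b_k)-(a_n-a_k)| < f(a_k,b_k,a_n)$ is a bounded interval, and $b \ne b_k$ excludes one more point), so some sufficiently large $b \ge a_n$ satisfies all of them.

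For part (1), consider a legal move from $(a_n, b_n)$. A Nim move of type (aa) decreases one coordinate while leaving the other fixed; since all $a_k$ and $b_k$ are distinct across the relevant ranges, the resulting position cannot be any $(a_m, b_m)$ — a detailed case check compares the unchanged coordinate against the $a$- and $b$-sequences. A diagonal move of type (bb) sends $(a_n, b_n)$ to some $(a_n - k, b_n - \ell)$ (or the reflected shape) with $|\ell - k| < f(\cdot)$; if this equalled $(a_m, b_m)$ for some $m < n$, then reading off $a_m = a_n - k$ and $b_m = b_n - \ell$ gives $|(b_n - b_m) - (a_n - a_m)| = |\ell - k| < f(a_m, b_m, a_n)$, directly contradicting the defining property of $b_n$ in \eqref{desiredb}. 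The edge cases where the move lands on the diagonal (a position $(c,c)$) are handled by the observation, flagged in the text before the lemma, that $(a_n, a_n) \in \mathcal{S}$ forces $f(a_k, b_k, a_n) = 0$ for all $k < n$, which I would incorporate into the bookkeeping.

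For part (2) — the main obstacle — take a position $(x, y)$, $x \le y$, not in $\mathcal{S}$, and produce a move into $\mathcal{S}$. If $x = a_m$ for some $m$ but $y \ne b_m$: when $y > b_m$ a type (aa) move reduces $y$ to $b_m$; when $y < b_m$ one shows $y$ occurs as some $a_k$ or $b_k$ with small index, and a careful argument (the heart of the proof) produces a diagonal or Nim move to a genuine $P$-position — this is where one must check the inequality in \eqref{desiredb} cuts the right way, using that $b_m$ was chosen \emph{minimal}, so any ``unreachable gap'' below it was unreachable precisely because a lower $P$-position blocks it, and that blocking witness is the target of our move. If $x \ne a_m$ for every $m$, then $x = b_m$ for a unique $m$ with $a_m < x$; a type (aa) move on the larger coordinate brings $(x,y)$ to $(b_m, a_m) \in \mathcal{S}$ provided $y \ge a_m$, and if $y < a_m \le x$ one re-sorts and falls into a previous case. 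I expect the delicate point to be showing that when $(x,y)$ lies ``between'' consecutive $P$-positions in the $b$-direction, the minimality in \eqref{desiredb} guarantees an available diagonal move whose displacement satisfies the strict inequality $|(y - b_k) - (x - a_k)| < f(a_k, b_k, a_n)$ for the appropriate $k$; I would isolate this as the key claim and prove it by contradiction with the minimality of the relevant $b_n$, mirroring Fraenkel's argument for Theorem \ref{mastertheorem} but without invoking monotonicity of $f$.
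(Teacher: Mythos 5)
Your proposal follows essentially the same route as the paper's proof: you verify via the $P$/$N$-position structure theorem that no move from $\mathcal{S}$ stays in $\mathcal{S}$ (which is built into the mex clause and the minimality defining $b_n$) and that from any position outside $\mathcal{S}$ either a Nim move or a diagonal move reaches $\mathcal{S}$, the diagonal move being witnessed exactly by the failure of the inequality in \eqref{desiredb} that minimality forces---precisely the paper's ``interpret $b_n$ as a minimizer'' step. Your case split by whether the smaller coordinate is an $a$- or a $b$-value is only a reorganization of the paper's four position types $(a_n,a_m)$, $(a_n,b_m)$, $(b_n,a_m)$, $(b_n,b_m)$, so the argument is the same in substance.
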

\begin{remark}
The minimum above can be rewritten as the mex:
\begin{align}
b_n &= \mex_{\geq a_n}
\left(\{b_k\}_{0<k<n}
    \cup\{b\geq a_n \;|\;
    |(b\!-\!b_k)\!-\!(a_n\!\!-\!a_k)|  \!<\! f(a_k,b_k,a_n)
    \}_{k<n} \right)\label{desiredb_mex}.
\end{align}
\end{remark}
\begin{proof}
In the setting of a subtraction game ending at $(0,0)$
the $P$-positions can be generated inductively, starting with $(0,0)$.
Label the first non-zero $P$-position $(a_1,b_1)$.
The component $a_1=\mex\{0\}$ is indicated because $(0,b)$ can be moved
to $(0,0)$ for any $b>0$.
The component $b_1$ is found by taking the minimum $b$ for which
$(a_1,b)$ has no move to lower $P$-positions:
\begin{align}
b_1 &= \min\{b\geq 1 \;|\; |(b-0)-(1-0)| \geq f(0,0,1)\}\\
&= 1 + f(0,0,1).
\end{align}
At the $n$th step, the position $(a_n,b_n)$ is computed similarly by taking
a mex for the first component
\begin{align}
a_n = \mex\{a_k,b_k \;|\; k < n\}.
\end{align}
If $a_n$ were not equal to this mex, then $a_n=a_k$ or $a_n=b_k$ for some
$k<n$.
In the first case, $(a_n,b_n)=(a_k,b_n)$. If $b_n>b_k$, then moving to
$(a_k,b_k)$ is possible by a Nim move, a contradiction.
Otherwise, if $b_n<b_k$, then the $P$-position
$(a_k,b_k)$ has a move to the $P$-position $(a_n,b_n)$ via a Nim move, which
is also a contradiction. Finally, if $a_n=b_k$, then the $P$-position
$(a_n,b_n)=(b_k,b_n)$ has a move to $(b_k,a_k)$ which is relabelled as the
$P$-position $(a_k,b_k)$, also a contradiction.

For the second component, now that $n>1$, we need to add a quantifier and restrict
$b\neq b_k$:
\begin{align}
b_n \!=\! \min\{b\!\geq\! a_n \;|\; b\neq b_k \;\text{and}\;
	|(b\!-\!b_k)-(a_n\!-\!a_k)| \!\geq\! f(a_k,b_k,a_n) \forall k\!<\!n\}.
\end{align}
Note that we can be sure $b_k\geq a_k$ because that is the convention we 
adopted for denoting game positions. Moreover, $b_n=a_n$ if and only if
$f(a_k,b_k,a_n)=0$ for all $k<n$.

The above paragraph indicates how to compute the set $\mathcal{S}$. Now we show 
that if a player is in any game position in the
complement of $\mathcal{S}$, then they can 
always move directly to a game position in $\mathcal{S}$.
Once this is proven, the structure theorem of $P$- and $N$-positions
implies that $\mathcal{S}$ is the set of $P$-positions.

It suffices to 
consider the following game positions, because the sequence $\{a_n\}$ consists 
of minimum excludants:
\begin{align}
(a_n, a_m) \quad (a_n, b_m) \quad (b_n, a_m) \quad (b_n, b_m)
\end{align}
for distinct $m$ and $n$.
If $f(a_k,b_k,a_n)=0$ for all $k<n$, then $b_n=a_n$ and only the first two
cases need to be inspected. For the first position, note that $a_m>a_n=b_n$,
so that moving to $(a_n,b_n)$ is possible. For the second position,
note that $b_m>a_n=b_n$, so a player can move to $(a_n,b_n)$.

Otherwise, suppose $f(a_k,b_k,a_n)\neq 0$ for some $k<n$.
To handle the first case, suppose $a_m>b_n$. Then a player may move
to $(a_n,b_n)$. Otherwise we have $a_m<b_n$. Interpreting $b_n$
as the minimizer
\begin{align}
b_n &= \min\{b \geq a_n \;|\; b\!\neq\!b_k \;\text{and}\;
|(b\!-\!b_k)\!-\!(a_n\!\!-\!a_k)|  \!\geq\! f(a_k,b_k,a_n)\forall k\!<\!n
\},
\end{align}
one can see that there exists $k<n$ such that
\begin{align}
|(a_m - b_k) - (a_n - a_k)| < f(a_k,b_k,a_n)
\end{align}
which indicates a player may make the diagonal move to $(a_k,b_k)$. For the 
second case, if $b_m > b_n$, then a player may make a Nim move to $(a_n,b_n)$. 
If $b_m < b_n$, then interpreting $b_n$ as a minimizer again reveals that a 
player may make a diagonal move from $(a_n, b_m)$ to some $(a_k,b_k)$.

For the third case, if $n<m$ then a player may make a Nim move to $(b_n,a_n)$ 
which is relabeled as $(a_n,b_n)$. Otherwise we shall have $m < n$, thus
obtaining the strict inequality
\begin{align}
a_m < a_n \leq b_n \leq a_m,
\end{align}
which is a contradiction, because the case $a_n=b_n$ was already addressed.
Finally, suppose a player is in the fourth position-type above.
If $b_m > a_n$, then a player can make a Nim move 
to $(b_n, a_n)$. Otherwise, $b_m < a_n$ implies the strict inequality
\begin{align}
b_n \leq b_m < a_n \leq b_n,
\end{align}
but we already handled the case $a_n=b_n$.

Therefore, $\mathcal{S}$ is the set of $P$-positions for a game played
with the constraint function $f$.
\end{proof}

The next two examples show how the previous lemma can be used to deduce
$P$-positions. First, let us prove the observation surrounding
Equation \eqref{fraenkelcandidate}.
\begin{lemma}\label{fraenkelcandidatelemma}
If a game in Definition \ref{modifiedgame} is played with the constraint
function
\begin{align}
f(x_1,y_1,[n\alpha]) = ([n\beta]-y_1)-([n\alpha]-x_1)
\end{align}
then the $P$-positions are pairs of the complementary Beatty sequences
parameterized by $\alpha$.
\end{lemma}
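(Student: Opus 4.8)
\emph{Proof proposal.} The plan is to deduce this directly from Lemma~\ref{doublemex}: that lemma identifies the $P$-positions of any game in Definition~\ref{modifiedgame} with the pairs $(a_n,b_n)$ generated by its $\mex$-recurrence, so it suffices to show that, for this particular $f$, the recurrence yields $a_n=[n\alpha]$ and $b_n=[n\beta]$ for all $n$. I would argue by induction on $n$. The base case is $a_0=b_0=0=[0\alpha]=[0\beta]$. For the inductive step, fix $n\geq 1$ and assume $a_k=[k\alpha]$, $b_k=[k\beta]$ for every $k<n$. Since the recurrence first produces $a_n$ from $\{a_k,b_k\mid k<n\}$ and only afterwards consults $f$, and since $f$ is only ever evaluated in the form $f(a_k,b_k,a_n)$ --- whose third slot is the value $a_n$ just computed --- there is no circularity in writing, once $a_n=[n\alpha]$ is known, $f(a_k,b_k,a_n)=([n\beta]-[k\beta])-([n\alpha]-[k\alpha])$ via the induction hypothesis applied to the first two slots. (This is the one place the genuinely three-argument, many-valued nature of $f$ must be handled with care.)

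For the first coordinate, I would invoke the Rayleigh--Beatty theorem (Theorem~\ref{rayleigh}): since $\{[k\alpha]\mid k\geq 0\}\cup\{[k\beta]\mid k\geq 0\}=\mathbb{Z}_{\geq 0}$, the two sequences meeting only at $0$ and each strictly increasing, every integer $m$ with $0\leq m<[n\alpha]$ is some $[k\alpha]$ or $[k\beta]$ necessarily with $k<n$, while $[n\alpha]$ is not an earlier $[k\alpha]$ (strict monotonicity), nor any $[k\beta]$ with $k\geq 1$ (complementarity), nor $0=b_0$ (as $n\geq 1$). Hence $a_n=\mex\{a_k,b_k\mid k<n\}=[n\alpha]$.

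For the second coordinate, recall $b_n=\min\{b\geq a_n\mid b\neq b_k\text{ and }|(b-b_k)-(a_n-a_k)|\geq f(a_k,b_k,a_n)\ \forall k<n\}$. I would first check $[n\beta]$ is admissible: it satisfies $[n\beta]\geq[n\alpha]=a_n$, it differs from every $b_k=[k\beta]$ with $k<n$ by strict monotonicity, and for each such $k$ the quantity $(b-b_k)-(a_n-a_k)$ with $b=[n\beta]$ equals $([n\beta]-[k\beta])-([n\alpha]-[k\alpha])=f(a_k,b_k,a_n)$, so the required inequality holds with equality (note this needs only $|c|\geq c$, not any positivity of $f$). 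Thus the set is nonempty and $b_n\leq[n\beta]$. To see nothing smaller is admissible, I would use only the index $k=0$: since $a_0=b_0=0$, for $[n\alpha]\leq b<[n\beta]$ we have $|(b-b_0)-(a_n-a_0)|=b-[n\alpha]$ while $f(a_0,b_0,a_n)=[n\beta]-[n\alpha]$, so $b<[n\beta]$ gives $b-[n\alpha]<f(a_0,b_0,a_n)$ and the $k=0$ condition fails; values $b<a_n$ are excluded outright. Hence $b_n=[n\beta]$, the induction closes, and Lemma~\ref{doublemex} identifies the $P$-positions as $\{([n\alpha],[n\beta])\}_{n\geq 0}$, which (since $1/\alpha+1/\beta=1$) are pairs of the complementary Beatty sequences parameterized by $\alpha$.

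I do not anticipate a substantive obstacle beyond Lemma~\ref{doublemex} itself: the heart of the matter is the observation that the single constraint coming from $k=0$ already pins $b_n$ down from below, because $f(a_0,b_0,a_n)=[n\beta]-[n\alpha]$ is precisely the distance one must travel from $a_n$ to $b_n$, while the admissibility of $[n\beta]$ relies on the constraints for $k\geq 1$ holding with equality rather than on any sign condition on $f$. The only points demanding attention are the bookkeeping around the three-argument $f$ noted above and the routine verification, via Theorem~\ref{rayleigh}, that the $\mex$ in the $a_n$-step lands on $[n\alpha]$.
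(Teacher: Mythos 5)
Your proposal is correct and takes essentially the same route as the paper: induction together with Lemma \ref{doublemex}, where your observation that every constraint holds with equality at $b=[n\beta]$ corresponds to the paper's computation that all forbidden intervals share the upper endpoint $[n\beta]-1$, and your use of the $k=0$ constraint to exclude all $b\in[a_n,[n\beta]-1]$ corresponds to the paper's remark that the intervals expand so that the $k=0$ interval reaches down to $a_n$. The extra details you supply (the Rayleigh--Beatty check that the mex equals $[n\alpha]$, the monotonicity check that $[n\beta]\neq b_k$, and the remark that $|c|\geq c$ makes the sign of $f$ irrelevant) simply make explicit what the paper leaves implicit.
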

\begin{proof}
By induction, suppose $(a_k,b_k)=([k\alpha],[k\beta])$ for all
$k<n-1$. This is true in the base case $(0,0)$.
At the $n$th step, the $P$-position $(a_n,b_n)$ is computed by
Lemma \ref{doublemex} as follows:
\begin{align}
a_n &= \mex\{a_k,b_k\;|\; k<n\}\\
b_n &= \mex(\{b_k\} \cup [a_n + b_k-a_k - f(a_k,b_k,a_n) + 1,
    a_n + b_k-a_k + f(a_k,b_k,a_n)-1]). \label{fraenkel_b_nmex}
\end{align}
The upper endpoint of any of the intervals in Equation \eqref{fraenkel_b_nmex}
can be simplified as
\begin{align}
a_n + b_k-a_k + f(a_k,b_k,a_n)-1 &= a_n + b_k-a_k + [n\beta]-a_n - (b_k-a_k)-1\\
&= [n\beta]-1.
\end{align}
The lower endpoint the intervals can be simplified as
\begin{align}
a_n + b_k-a_k - f(a_k,b_k,a_n)+1 &= a_n + b_k-a_k - [n\beta]+a_n + (b_k-a_k)+1\\
&= 2a_n - [n\beta] + 2(b_k-a_k) + 1.
\end{align}
This indicates the intervals expand. After taking the entire union, we are
left evaluating $\mex_{\geq a_n}([2a_n-[n\beta]+1, [n\beta]-1])$.
Since $[n\beta]>a_n$, we can see that $2a_n-[n\beta]<a_n$, and so
$2a_n-[n\beta]+1\leq a_n$, which indicates the $\mex$ equals $[n\beta]$.
\end{proof}

Let us also prove that the third counterexample in Proposition 2 of
\cite{fraenkel_new_2004} is a actually a genuine example. The constraint
function in question is
\begin{align}
f(x_1,y_1,x_0)=(1+(-1)^{y_1+1})x_1/2.\label{fraenkel_nonexample}
\end{align}
Table 6 of the same paper contains several
$\mathcal{S}$-positions according to the recurrence formula of Equation
\eqref{recurrence}. The author claims these are not $P$-positions, because
``$(10,29)$ cannot be moved to any of the lower $\mathcal{S}$-positions'',
suggesting that the pair $(10,31)$ from the table is not a $P$-position,
because $(10,29)$ ought to be.
However, a closer look shows that moving from $(10,29)$ to the
$\mathcal{S}$-position $(8,21)$ \emph{is} allowed,
because $|(29\!-\!21)-(10\!-\!8)|=6<f(8,21,10)=(1+(-1)^{22})8/2=8$.
The remaining $P$-positions can be computed using Lemma \ref{doublemex},
showing the same recurrence formula holds.
\begin{lemma}
If a game in Definition \ref{modifiedgame} is played with the constraint function
in Equation \eqref{fraenkel_nonexample},
the set of $\mathcal{S}$-positions starting with $(a_0,b_0)=(0,0)$
and $(a_1,b_1)=(1,1)$, followed by
\begin{align}
a_n &= \mex\{a_k, b_k \;|\; k < n\}\\
b_n &= a_n + b_{n-1} - a_{n-1} + f(a_{n-1},b_{n-1},a_n)\\
&= \begin{cases}
a_n + b_{n-1} & \text{$b_{n-1}$ odd}\\
a_n + b_{n-1}-a_{n-1} & \text{$b_{n-1}$ even}.
\end{cases}
\end{align}
for all $n\geq 2$ are $P$-positions.
\end{lemma}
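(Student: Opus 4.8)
The plan is to apply Lemma~\ref{doublemex} directly and verify by induction that the recurrence given in the statement produces exactly the sequence $\mathcal{S}=\{(a_n,b_n)\}$ described there. Concretely, I would assume inductively that $(a_0,b_0),\dots,(a_{n-1},b_{n-1})$ are as claimed, and that these coincide with the $P$-positions of Lemma~\ref{doublemex} up to index $n-1$. Since the $a_n$ in both the statement and in Lemma~\ref{doublemex} are defined by the same mex formula $a_n=\mex\{a_k,b_k\mid k<n\}$, the first coordinate matches automatically; the content is entirely in checking that the $b_n$ produced by Lemma~\ref{doublemex}'s formula \eqref{desiredb} agrees with the claimed closed form, which is $a_n+b_{n-1}$ when $b_{n-1}$ is odd and $a_n+b_{n-1}-a_{n-1}$ when $b_{n-1}$ is even.

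First I would unwind what $f(x_1,y_1,x_0)=(1+(-1)^{y_1+1})x_1/2$ actually equals: it is $x_1$ when $y_1$ is odd and $0$ when $y_1$ is even. So for a diagonal move landing in a previously computed $P$-position $(a_k,b_k)$, the blocking constraint $|(b-b_k)-(a_n-a_k)|<f(a_k,b_k,a_n)=f(a_k)$ — wait, here $f$ genuinely depends on $(x_1,y_1,x_0)=(a_k,b_k,a_n)$, equaling $a_k$ if $b_k$ odd and $0$ if $b_k$ even. I would then split into the two cases by the parity of $b_{n-1}$ and argue that the dominant (largest) forbidden interval among all $k<n$ is the one coming from $k=n-1$, so that $b_n=\mex_{\ge a_n}$ of the union collapses to $a_n+b_{n-1}-a_{n-1}+f(a_{n-1},b_{n-1},a_n)$ exactly as in the original Fraenkel recurrence \eqref{recurrence}. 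The key sub-claim is that the earlier $P$-positions $(a_k,b_k)$ with $k<n-1$ impose no additional obstruction past $a_n+b_{n-1}-a_{n-1}+f(a_{n-1})$; this is the place where one needs the specific structure of the generated sequence (e.g.\ that the $b_k$ are increasing in $k$ once we pass the initial segment, and that the even-$b_k$ indices are sparse enough). I would also separately dispose of the base cases $(a_0,b_0)=(0,0)$ and $(a_1,b_1)=(1,1)$: the latter follows since $f(0,0,1)=0$, forcing $b_1=a_1=1$, matching the stated initialization.

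The main obstacle I anticipate is exactly the monotonicity/domination argument for the second coordinate: unlike Theorem~\ref{mastertheorem}, here $f$ is neither monotone nor invariant, so I cannot simply cite that theorem, and I must show by hand that no ``old'' $P$-position $(a_k,b_k)$ with small index blocks the value $b_n$ that the one-step recurrence predicts, nor that any intermediate integer in $[a_n,b_n)$ survives as a candidate. For this I would prove an auxiliary monotonicity lemma by induction — something like $b_{k}-a_k$ is nondecreasing in $k$, or more precisely that the forbidden interval $[a_n+b_k-a_k-f(a_k)+1,\,a_n+b_k-a_k+f(a_k)-1]$ for $k=n-1$ contains or abuts all the others and leaves no gap above $a_n$ — and then the $\mex_{\ge a_n}$ computation becomes the same telescoping as in the proof of Lemma~\ref{fraenkelcandidatelemma}. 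A secondary nuisance is the parity bookkeeping: one must track that whichever branch of the case split applies at step $n$ is consistent with the parity of the $b_{n-1}$ actually produced, which requires carrying the parity pattern of $\{b_k\}$ along in the induction hypothesis. Once these structural facts are in hand, the verification that $\mathcal{S}$ satisfies the hypotheses of Lemma~\ref{doublemex} is routine, and the conclusion that $\mathcal{S}$ is the $P$-position set follows immediately.
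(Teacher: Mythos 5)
Your plan follows the paper's proof in all essentials: apply Lemma~\ref{doublemex}, induct on $n$, split on the parity of $b_{n-1}$, and reduce the computation of $b_n$ to showing that the union of the forbidden intervals $[a_n+b_k-a_k-f(a_k,b_k)+1,\,a_n+b_k-a_k+f(a_k,b_k)-1]$, $k<n$, leaves no gap above $a_n$, so that the $\mex$ lands exactly at $a_n+b_{n-1}-a_{n-1}+f(a_{n-1},b_{n-1})$. One intermediate claim, however, is false as stated and cannot serve as the pivot of the induction: the interval coming from $k=n-1$ is neither dominant nor does it ``contain or abut all the others.'' When $b_{n-1}$ is even that interval is empty, yet the claimed value $b_n=a_n+b_{n-1}-a_{n-1}$ still has to be justified by the \emph{earlier} intervals reaching up to $a_n+b_{n-1}-a_{n-1}-1$; and even when $b_{n-1}$ is odd the left endpoints $a_n+b_k-2a_k+1$ are not monotone in $k$ (for this game $b_3-2a_3=-1$ while $b_5-2a_5=1$), so later intervals do not contain earlier ones --- they only overlap or abut. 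The correct formulation, which your fallback phrase ``leaves no gap above $a_n$'' gestures at and which the paper carries as a strengthened induction hypothesis, is that the intervals indexed by $k<n-1$ already cover the contiguous block $\{a_n,a_n+1,\dots,b_{n-1}-1+(a_n-a_{n-1})\}$ (the step-$(n-1)$ covering of $\{a_{n-1},\dots,b_{n-1}-1\}$ shifted by $a_n-a_{n-1}$); in the odd case one then checks that the new interval abuts this block, which reduces to the inequality $1\le a_{n-1}$, valid for $n\ge 2$. With your sub-claim replaced by this covering hypothesis, together with the easy observation that $b_k-a_k$ is nondecreasing (since $f\ge 0$), so the predicted value cannot coincide with an earlier $b_j$, your argument becomes the paper's proof; the extra parity bookkeeping you worry about is not needed, since each step only reads off the parity of the $b_{n-1}$ already computed.
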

\begin{proof}
Base cases at $n=0,1$ can be verified by hand.
For an induction step, the next $P$-position is found by
applying Lemma \ref{doublemex}
\begin{align}
a_n &= \mex\{a_k, b_k \;|\; k < n\}\\
b_n &= \mex(\{b_k\} \cup [a_n + b_k - a_k - f(a_k, b_k) + 1,
                        a_n + b_k - a_k + f(a_k, b_k) - 1]_{k<n}). \label{fraenkel_intervals}
\end{align}
Note that the intervals
\begin{align}
[a_{n-1} + b_k-a_k-f(a_k,b_k)+1,
a_{n-1} + b_k-a_k+f(a_k,b_k)-1]_{k<n-1}
\end{align}
cover the set $\{a_{n-1},a_{n-1}+1,\dots,b_{n-1}-1\}$ by the inductive hypothesis.
Therefore it can be seen that the shifted intervals
\begin{align}
[a_n + b_k-a_k-f(a_k,b_k)+1,
a_n + b_k-a_k+f(a_k,b_k)-1]_{k<n-1}
\end{align}
cover the set $\{a_n,a_n +1,\dots,b_{n-1}-1+(a_n-a_{n-1})\}$.

If $b_{n-1}$ is even, then $f(a_{n-1},b_{n-1})=0$ and the last interval of Equation
\eqref{fraenkel_intervals} is
empty, so we simply take $b_n=a_n+b_{n-1}-a_{n-1}$.
If $b_{n-1}$ is odd, then $f(a_{n-1},b_{n-1})=a_{n-1}$ and the last interval considered is
\begin{align}
[a_n + b_{n-1} - 2a_{n-1} + 1, a_n + b_{n-1} - 1].
\end{align}
There are no integers in-between the set $\{a_n,a_n +1,\dots,b_{n-1}-1+(a_n-a_{n-1})\}$
and this interval, because we can see that
\begin{align}
a_n + b_{n-1} - 2a_{n-1} + 1 &\leq b_{n-1}-1+(a_n-a_{n-1})+1 \iff \\
1 &\leq a_{n-1}
\end{align}
which holds for all $n\geq 2$. Therefore, we can determine that
$b_n = a_n + b_{n-1}$.
\end{proof}
Now that we have seen two examples of how Lemma \ref{doublemex} can be used to
determine a game's $P$-positions, let us move on to the main result characterizing
when the $P$-positions in games of Definition \ref{modifiedgame} are pairs of
complementary Beatty sequences.

\section{The Inverse Problem}\label{inverseproblem}
In this section, we pose our inverse problem:
if the $P$-positions of a game in Definition \ref{modifiedgame}
are pairs of complementary Beatty sequences, what is the constraint function of that
game? Is it the Beatty constraint function? From which functions
arise games whose $P$-positions are pairs of
complementary Beatty sequences? These three questions are answered in this section,
per the proviso in Remark \ref{dependence}.

The next theorem refines the minimum for a
class of constraint functions which satisfy a certain inequality.
\begin{theorem}\label{mastertheorem_expanded}
Let $f\geq 1$ be a function of integers into integers and let $n>1$ be fixed. Suppose a game 
in Definition \ref{modifiedgame} has $P$-positions $(a_k,b_k)$ which satisfy
\begin{align}
\label{eqforbsubk}
a_k &= \mex\{a_j, b_j \;|\; j < k\}\\\nonumber
b_k &= f(a_k) + b_{k-1} + a_k - a_{k-1}
\end{align}
for all $0<k < n$. If $2f(a_n)-f(a_k)\geq 1$ is true for all $0<k<n$,
then
\begin{align}
b_n = f(a_n) + b_{n-1} + a_n - a_{n-1}. \label{b_nformula}
\end{align}
\end{theorem}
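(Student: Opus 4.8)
The plan is to apply Lemma~\ref{doublemex} to compute $b_n$ directly, and to show that under the hypothesis $2f(a_n)-f(a_k)\geq 1$ for all $0<k<n$, the candidate value $f(a_n)+b_{n-1}+a_n-a_{n-1}$ is exactly the minimum described there. Using the $\mex$-with-intervals reformulation in Equation~\eqref{desiredb_mex}, I must show two things: first, that every integer $b$ in the range $[a_n,\, f(a_n)+b_{n-1}+a_n-a_{n-1})$ is \emph{excluded} — either it equals some $b_k$ with $k<n$, or it falls in one of the forbidden intervals $\{b : |(b-b_k)-(a_n-a_k)| < f(a_k,b_k,a_n)\}$; and second, that the candidate value itself is \emph{not} excluded, i.e. it differs from every $b_k$ and lies outside every forbidden interval. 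Since the paper has adopted the proviso that $f$ depends only on the position moved from (Remark~\ref{dependence}), I write $f(a_k,b_k,a_n)=f(a_k)$ throughout.

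First I would handle the $k=n-1$ term to pin down the candidate. By the recurrence hypothesis \eqref{eqforbsubk}, $b_{n-1}=f(a_{n-1})+b_{n-2}+a_{n-1}-a_{n-2}$ for $n-1>1$ (and the base case $n-1=1$ is handled separately), so the interval of forbidden $b$ coming from index $k=n-1$ is centered at $b_{n-1}+(a_n-a_{n-1})$ with half-width $f(a_{n-1})-1$; its largest forbidden value is $b_{n-1}+a_n-a_{n-1}+f(a_{n-1})-1$. The candidate $b_n^{\ast} := f(a_n)+b_{n-1}+a_n-a_{n-1}$ exceeds this by $f(a_n)-f(a_{n-1})+1$, which is $\geq 1$ by hypothesis (taking $k=n-1$), so $b_n^\ast$ lies strictly above the $(n-1)$-interval. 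More importantly, I would argue inductively (using the inductive structure already present in how the $b_k$ were generated) that the union of the forbidden intervals from all $k<n-1$, \emph{together} with the $(n-1)$-interval and the points $\{b_k\}$, covers the entire block of integers from $a_n$ up through $b_n^\ast-1$ with no gaps: the intervals coming from indices $k<n-1$, shifted by $a_n-a_{n-1}$ relative to the analogous intervals at the previous stage, cover $\{a_n,\dots,b_{n-1}-1+(a_n-a_{n-1})\}$ exactly as in the proof of Lemma~\ref{fraenkelcandidatelemma}, and then the $(n-1)$-interval picks up from $b_{n-1}+a_n-a_{n-1}-f(a_{n-1})+1$ — which is $\le$ the top of that block plus one — and runs up to $b_{n-1}+a_n-a_{n-1}+f(a_{n-1})-1$; finally the point $b_{n-1}$ itself (if it is not already covered) and the gap up to $b_n^\ast$ must be closed. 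This last gap-closing is where the hypothesis $2f(a_n)-f(a_k)\ge 1$ does real work: it guarantees the top of each $k$-interval, once re-examined at level $n$, reaches far enough.

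Second I would verify $b_n^\ast$ is not itself forbidden: it cannot equal any $b_k$ (it lies strictly above the covered block, which contains all $b_k<b_n^\ast$ relevant, and it is the first integer above the union), and for each $k<n$ the distance $|(b_n^\ast-b_k)-(a_n-a_k)|$ must be shown to be $\ge f(a_k)$. Writing $b_n^\ast-b_k = f(a_n)+(b_{n-1}-b_k)+(a_n-a_{n-1})$ and using the recurrence to telescope $b_{n-1}-b_k = \sum_{j=k+1}^{n-1}\big(f(a_j)+a_j-a_{j-1}\big)$, the quantity $(b_n^\ast-b_k)-(a_n-a_k)$ collapses to $f(a_n)+\sum_{j=k+1}^{n-1}f(a_j)$, which is at least $f(a_n)\ge 1>0$, so the absolute value is this sum; and this sum is $\ge f(a_k)$ precisely when $f(a_n)+\sum_{j=k+1}^{n-1}f(a_j)\ge f(a_k)$, which follows from $2f(a_n)\ge f(a_n)+1\ge f(a_k)$ (using $f(a_n)\ge 1$) combined with the nonnegativity of the remaining $f(a_j)$'s — this is exactly the content of the hypothesis $2f(a_n)-f(a_k)\ge 1$. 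Hence $b_n^\ast$ is admissible and, being the least admissible value $\ge a_n$ by the covering argument, equals $b_n$.

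The main obstacle I anticipate is the covering step: making rigorous the claim that the forbidden intervals leave no gaps between $a_n$ and $b_n^\ast-1$. The re-indexing at each level (intervals at level $n$ are those at level $n-1$ translated by $a_n-a_{n-1}$, but with the \emph{new} far-right interval from $k=n-1$ appended) needs a careful inductive bookkeeping of where each interval starts and ends, and the inequality $2f(a_n)-f(a_k)\ge 1$ must be invoked exactly at the junctions to prevent a gap. Everything else — the telescoping of $b_{n-1}-b_k$ and the verification that $b_n^\ast$ is admissible — is a routine computation once the recurrence is unrolled.
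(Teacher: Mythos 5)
There is a genuine error at the very start of your argument, and it propagates through everything else: you set $f(a_k,b_k,a_n)=f(a_k)$, but Remark \ref{dependence} says the constraint depends on the position moved \emph{from}, i.e.\ $f(x_1,y_1,x_0)=f(x_0)$. When one tests whether $(a_n,b)$ has a diagonal move down to $(a_k,b_k)$, the position moved from is $(a_n,b)$, so the relevant value is $f(a_n)$ for \emph{every} $k<n$ (this is exactly how the paper writes its mex in the proof: all forbidden intervals $I_k=[a_n+b_k-a_k-f(a_n)+1,\,a_n+b_k-a_k+f(a_n)-1]$ share the common half-width $f(a_n)-1$). With the correct reading, the hypothesis $2f(a_n)-f(a_k)\ge 1$ is precisely the no-gap condition: consecutive interval centers differ by $f(a_{k+1})$ (from the recurrence and the monotonicity of $b_k-a_k$), so the number of integers between $I_k$ and $I_{k+1}$ is $f(a_{k+1})-2f(a_n)+1\le 0$, the union is a single block starting at $a_n$, and $b_n$ is the top of $I_{n-1}$ plus one, namely $a_n+b_{n-1}-a_{n-1}+f(a_n)$, provided it is not some earlier $b_j$ (ruled out by contradiction using monotonicity of $b_k-a_k$). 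That is the paper's whole proof.

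Under your reading, by contrast, the intervals have varying half-widths $f(a_k)-1$, the no-gap condition becomes automatic from $f\ge 1$ (so the hypothesis would be doing no work there), and the argument breaks in two places. First, your candidate $b_n^\ast$ sits $f(a_n)-f(a_{n-1})+1$ above the last interval; the hypothesis $2f(a_n)-f(a_{n-1})\ge 1$ does \emph{not} give $f(a_n)\ge f(a_{n-1})$ (take $f(a_{n-1})=3$, $f(a_n)=2$), so your covering claim fails, and indeed in that example your admissibility check fails too, since the distance $f(a_n)=2$ is less than your threshold $f(a_{n-1})=3$. Second, your final inequality chain $2f(a_n)\ge f(a_n)+1\ge f(a_k)$ is invalid: $2f(a_n)-f(a_k)\ge 1$ does not imply $f(a_n)+1\ge f(a_k)$ (take $f(a_n)=3$, $f(a_k)=5$). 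Fixing the identification $f(a_k,b_k,a_n)=f(a_n)$ makes the admissibility of the candidate trivial and moves all the work to the gap count, where the hypothesis belongs; the translation-of-intervals bookkeeping you anticipated as the hard step is then unnecessary.
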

\begin{proof}
First off, if $f(a_n)=1$, we can deduce the inequality
\begin{align}
2 - f(a_k) \geq 1 \implies 1 \geq f(a_k) \implies f(a_k)=1 \forall k<n
\end{align}
from which we obtain the classical Wythoff game. The recurrence formula is
known in this case. Otherwise, suppose $f(a_n)\not\equiv 1$.

By combining the second line of Equation \eqref{eqforbsubk} and the
hypothesis $f\ge 1$, one can deduce the sequence $\{b_k-a_k\}$ is
monotone, because
\begin{align}
b_k-a_k &= b_{k-1}-a_{k-1} + f(a_k)\\
&\geq b_{k-1}-a_{k-1} + 1
\end{align}
As in the remark following Lemma \ref{doublemex}, we can
re-write the minimum for $b_n$ of the same lemma as a minimum excludant:
\begin{align}
\label{b_nmex}
b_n &= \mex_{\geq a_n} \left( \{b_k\}_{k<n}\cup
	\{b \geq a_n \;|\; |(b\!-\!b_k)\!-\!(a_n\!\!-\!a_k)| < f(a_n)\}_{k<n}\right).
\end{align}
By re-writing the inequality $|(b\!-\!b_k)\!-\!(a_n\!\!-\!a_k)| < f(a_n)$ we conclude that $b$ belongs to the interval
of integers
\begin{align}
I_k = [a_n\!+\!b_k\!-\!a_k\! -\! f(a_n)\! +\! 1,
	a_n\!+\!b_k\!-\!a_k\! +\! f(a_n)\! -\! 1] \cap \mathbb{Z}_{\geq a_n}.
    \label{interval_definition}
\end{align}
Therefore, the mex in equation \eqref{b_nmex} is an integer lying outside any interval $I_k$. With
the assumption $2f(a_n) - f(a_k) \geq 1$ for
all $0<k<n$, we can deduce that the union of
the intervals $\{I_k\}$ is also an interval. To
see this, recall that $\{b_k-a_k\}$ is monotone.
This indicates that the intervals can be ordered
monotonically also. Now, count the number of
integers lying between the right-endpoint of
the interval $I_k$ and the left-endpoint of the
interval $I_{k+1}$:
\begin{align}
(a_n + b_{k+1}-a_{k+1} - f(a_n))
- &(a_n + b_{k+1}-a_{k+1} + f(a_n)) + 1\\
&= f(a_{k+1}) - 2f(a_n) + 1\\
&\leq 0
\end{align}
where the last inequality follows from the
assumption. Thus the number of integers between consecutive intervals is
less than or equal to zero. In other words, we arrive at the \emph{important conclusion}
that there is no gaps between interval, i.e., consecutive intervals are either
overlapping or the starting point of the next interval is exactly one integer
away from the end point of the previous interval. This permits an ansatz for
the minimum excluded $b_n$ by simply evaluating
$\max I_n+1$.
\begin{align}
\widetilde{b_n} = a_n + b_{n-1}-a_{n-1} + f(a_n).
\end{align}
Now that we have a candidate for $b_n$, we need to just be sure that 
$\widetilde{b_n}\neq b_j$ for all $j<n$, which can be demonstrated by 
contradiction. Suppose
\begin{align}
b_j = a_n + b_{n-1} - a_{n-1} + f(a_n)
\end{align}
for some $j<n$.
Since the sequence $\{a_k\}$ is monotone and $j<n$ we can deduce that
\begin{align}
&b_j > a_j + b_{n-1} - a_{n-1} + f(a_n)\\
&\implies (b_j - a_j) - (b_{n-1} - a_{n-1}) > f(a_n).
\end{align}
But since $\{b_k-a_k\}_{k<n}$ is monotone, this implies that $f(a_n)<0$, 
contradictory to our assumption that $f\ge 1$.
\end{proof}

Applying the previous result, we can deduce
the following special case.
\begin{lemma}\label{trivialnogaplemma}
Let a game in Definition \ref{modifiedgame} be played with the Beatty
constraint function parameterized by some $\alpha<5/4$. Then the
$P$-positions are pairs of the complementary Beatty sequences
parameterized by $\alpha$.
\end{lemma}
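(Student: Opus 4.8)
The plan is to verify the hypothesis $2f(a_n) - f(a_k) \geq 1$ of Theorem~\ref{mastertheorem_expanded} for \emph{every} pair $0 < k < n$, since once this inequality is established for all $n$, an easy induction on $n$ (with the base case $(a_0,b_0)=(0,0)$ handled by hand) gives that the $P$-positions of the game satisfy the recurrence in Equation~\eqref{eqforbsubk}, and by Lemma~\ref{fraenkelcandidatelemma} (or directly, since the Beatty constraint function equals $\Delta^2_{n-1}$ on the diagonal of the recurrence) this recurrence produces exactly the pairs $([n\alpha],[n\beta])$. So the entire burden is the inequality.

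First I would use Lemma~\ref{h_function_lemma} together with its concluding remark: when $1 < \alpha < 2 < \beta$ and $1/\alpha+1/\beta=1$, the Beatty constraint function $f$ takes all its values in the three-element set $\{[\beta]-2,\,[\beta]-1,\,[\beta]\}$, and moreover $f \geq [\beta]-2 \geq 0$; in fact the hypothesis $f \geq 1$ forces $[\beta] \geq 3$, equivalently $\beta > 3$, equivalently $1/\alpha < 2/3$, i.e.\ $\alpha > 3/2$. Wait---this is the point where the hypothesis $\alpha < 5/4$ becomes relevant in the \emph{opposite} direction: if $\alpha < 5/4$ then $\beta = \alpha/(\alpha-1) > 5$, so $[\beta] \geq 5$ and $f \in \{[\beta]-2,[\beta]-1,[\beta]\} \subseteq \{3,4,\dots\}$, meaning $f(a_k) \geq [\beta]-2$ and $f(a_n) \geq [\beta]-2$ for all indices. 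Then
\begin{align}
2f(a_n) - f(a_k) \;\geq\; 2([\beta]-2) - [\beta] \;=\; [\beta] - 4 \;\geq\; 1
\end{align}
since $[\beta] \geq 5$. That is precisely the required inequality, uniformly in $k$ and $n$, so Theorem~\ref{mastertheorem_expanded} applies at every step.

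The induction to finish: assume $(a_k,b_k) = ([k\alpha],[k\beta])$ for all $k < n$. The first-component formula $a_n = \mex\{a_k,b_k \mid k<n\}$ gives $a_n = [n\alpha]$ by complementarity of the two Beatty sequences (Theorem~\ref{rayleigh}), since $\{[k\alpha],[k\beta] \mid k < n\}$ is exactly the set of nonnegative integers below $[n\alpha]$ together with some integers above it---the standard mex-identifies-the-Beatty-sequence argument. Then the just-verified inequality lets us invoke Theorem~\ref{mastertheorem_expanded} to get $b_n = f(a_n) + b_{n-1} + a_n - a_{n-1}$, and substituting $f([n\alpha]) = ([n\beta]-[(n-1)\beta]) - ([n\alpha]-[(n-1)\alpha])$ from Definition~\ref{beatty_constraint_function} collapses the right-hand side to $[n\beta]$, closing the induction.

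The only subtlety---and the step I'd flag as the place to be careful rather than the place where something genuinely hard happens---is bookkeeping the edge of the range of $\alpha$: the lemma is stated for $\alpha < 5/4$ but implicitly also needs $1 < \alpha < 2$ for the Beatty constraint function of Definition~\ref{beatty_constraint_function} to be defined and for Lemma~\ref{h_function_lemma} to apply, and one should note $\beta > 5$ is a \emph{strict} inequality with $\beta$ irrational, so $[\beta] \geq 5$ is safe (no boundary case). I would also remark that $5/4$ is exactly the threshold where $[\beta]-4$ drops to $0$, which is why this ``trivial no-gap'' argument stops working there and the sharper analysis of the later theorems is needed; this motivates the name of the lemma and the transition to Theorem~\ref{beattymodified2pilesaregolden}.
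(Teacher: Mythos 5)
Your proposal is correct and follows essentially the same route as the paper: use $\alpha<5/4\Rightarrow[\beta]\geq 5$ together with Lemma~\ref{h_function_lemma} to get $f\in\{[\beta]-2,[\beta]-1,[\beta]\}$, deduce $2f(a_n)-f(a_k)\geq 2([\beta]-2)-[\beta]=[\beta]-4\geq 1$ (the paper spells this out as casework over whether $f$ is one-, two-, or three-valued, which your single worst-case bound subsumes), invoke Theorem~\ref{mastertheorem_expanded}, and close with the same induction substituting the Beatty constraint function into $b_n=f(a_n)+b_{n-1}+a_n-a_{n-1}$. The only blemish is the stray aside claiming $f\geq 1$ forces $\alpha>3/2$ (the inequality actually goes the other way), but since you discard it and it plays no role in the argument, the proof stands.
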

\begin{proof}
The assumption $\alpha<5/4$ and the relation
$\alpha^{-1}+\beta^{-1}=1$ imply that $[\beta]\geq 5$.
The study of the difference function in Lemma
\ref{h_function_lemma} indicates
that the constraint function
may take values only from the set
$\{[\beta]-1,[\beta],[\beta]-2\}$
which implies the constraint
function is bounded from below,
\emph{i.e.}, $f\ge3$. Suppose the function $f$ is three-valued. Then
\begin{align}
2\min f - \max f = 2([\beta]-2)-[\beta] = [\beta]-4 \geq 1.
\end{align}
If $f$ is two-valued, then there are two other possibilities to exhaust:
\begin{align}
2\min f - \max f &= 2([\beta]-1)-[\beta] = [\beta]-2 \geq 3\\
2\min f - \max f &= 2([\beta]-2)-([\beta]-1) = [\beta]-3 \geq 2.
\end{align}
If $f$ is constant, $2\min f-\max f\equiv f\geq 3$.
In all of these cases, we see that $2\min f - \max f \geq 1$, so that
Theorem \ref{mastertheorem_expanded} may be applied. To explicate that the $P$-positions
are indeed complementary Beatty sequences,
an inductive argument is sufficient.
Assume that $(a_n,b_n)=([n\alpha],
[n\beta])$ for all $n<N$. This is true for
$N=1$. For the inductive step, note that
the formula of Equation \ref{b_nformula} holds
for all $n>0$. We can thus deduce that
\begin{align}
b_N &= f(a_N) + b_{N-1}+a_N-a_{N-1}\\
&= ([N\beta]-[N\alpha])
    -([(N-1)\beta]-[(N-1)\alpha])
+ b_{N-1} + a_N-a_{N-1}\\
&= ([N\beta]-[N\alpha])
    -([(N-1)\beta]-[(N-1)\alpha])
+ [(N-1)\beta] + a_N-[(N-1)\alpha]\\
&= [N\beta] + a_N-[N\alpha]
\end{align}
where the inductive hypothesis was applied
to substitute values for $b_{N-1}$ and
$a_{N-1}$.
It is a property of complementary Beatty
sequences that $a_N=\mex\{a_k,b_k\}=[N\alpha]$,
which then implies $b_N=[N\beta]$, completing
the induction.
\end{proof}

The next lemma demonstrates that the inequality
derived in the proof of Theorem \ref{mastertheorem_expanded} is a
\emph{necessary} property of a constraint function whose game has
$P$-positions which are complementary Beatty sequences.
\begin{lemma}
Suppose a game in Definition \ref{modifiedgame} is played with the
Beatty constraint function $f$ parameterized by $\alpha$. Then the
$P$-positions
are the complementary Beatty sequences
if and only if $2\min f - \max f \geq 1$.
\end{lemma}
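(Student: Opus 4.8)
The plan is to prove the two implications separately, largely by invoking what has already been assembled.

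\emph{Sufficiency.} Suppose $2\min f-\max f\geq1$. This forces $\min f\geq1$, hence $2f(a_n)-f(a_k)\geq 2\min f-\max f\geq1$ for all $0<k<n$, which is exactly the hypothesis of Theorem~\ref{mastertheorem_expanded}. A straightforward induction on $n$ (base case $n=1$ read off directly from Lemma~\ref{doublemex}, inductive step supplied by Theorem~\ref{mastertheorem_expanded}) then shows the $P$-positions obey $b_n=f(a_n)+b_{n-1}+a_n-a_{n-1}$ for every $n$. A second induction identifies these with Beatty pairs exactly as in the proof of Lemma~\ref{trivialnogaplemma}: assuming $(a_k,b_k)=([k\alpha],[k\beta])$ for $k<n$, complementarity (Theorem~\ref{rayleigh}) gives $a_n=\mex\{a_k,b_k\mid k<n\}=[n\alpha]$, and feeding $f(a_n)=([n\beta]-[(n-1)\beta])-([n\alpha]-[(n-1)\alpha])$ into the recurrence collapses it to $b_n=[n\beta]$.

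\emph{Necessity (contrapositive).} Suppose $2\min f-\max f\leq0$ and, for contradiction, that $(a_n,b_n)=([n\alpha],[n\beta])$ for all $n$. If $\min f=0$, pick any $n\geq1$ with $f(a_n)=0$; since $f(a_k,b_k,a_n)=f(a_n)$ by Remark~\ref{dependence}, the diagonal constraint in Lemma~\ref{doublemex} is vacuous, so $b_n=\mex_{\geq[n\alpha]}\{[k\beta]\}_{k<n}=[n\alpha]$ because $[n\alpha]\notin\{[k\beta]\}_{k\geq0}$ (Theorem~\ref{rayleigh}), contradicting $[n\alpha]\neq[n\beta]$. So assume $\mu:=\min f\geq1$ and $M:=\max f$, hence $M\geq2\mu$. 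Now I would study the gaps in the interval family \eqref{interval_definition} appearing in the $\mex$ that defines $b_n$. Since $f=\Delta^2$ attains each of its values infinitely often (Lemma~\ref{h_function_lemma}), fix $n$ with $f(a_n)=\mu$ and any $m<n$ with $f(a_m)=M$. Under the standing Beatty assumption, in the $b_n$-computation each $I_k$ has half-width $\mu-1$, its centre is $a_n+(b_k-a_k)$, these centres strictly increase with consecutive spacing $f(a_{k+1})$, so $I_{m-1}$ and $I_m$ leave a block of exactly $M-2\mu+1$ consecutive integers uncovered by $\bigcup_{k<n}I_k$; its minimum $a_n+(b_{m-1}-a_{m-1})+\mu$ lies strictly below $[n\beta]=b_n$. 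Since the $\mex$ must equal $[n\beta]$, this block has to be covered by the points $\{[k\beta]\}_{k<n}$, which are spaced at least $b_{k+1}-b_k=(a_{k+1}-a_k)+f(a_{k+1})\geq1+\mu\geq2$ apart. When $M-2\mu+1\geq2$ this is impossible: the block contains two consecutive integers, both below $[n\beta]$, and at least one of them is not a member of $\{[k\beta]\}$, hence is uncovered, forcing $b_n<[n\beta]$, a contradiction.

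\emph{The residual case $M=2\mu$.} Here the block is a single integer, and from the range $\{[\beta]-2,[\beta]-1,[\beta]\}$ of $\Delta^2$ (with $[\beta]-1$ always attained, Lemma~\ref{h_function_lemma}) one checks this forces $([\beta],\{\mu,M\})\in\{(3,\{1,2\}),(4,\{2,4\})\}$, i.e.\ $\alpha$ in two short intervals; the value $[\beta]=2$ does not occur, since there $\min f\geq1$ already forces $\alpha=\phi$ (whence $2\min f-\max f=1$). In these cases I would take $m$ equal to $n-1$ (when $[\beta]=3$) or $n-2$ (when $[\beta]=4$), so that the unique gap integer $g$ is pinned down in terms of $b_{n-1},b_{n-2}$; using the Sturmian identity $a_n-a_{n-1}=[\alpha]+g_{1/\{\alpha\}}(n-1)$, which together with $f(a_n)=\mu$ fixes the recent differences of $\{a_k\}$, one verifies that $g$ lies strictly between two consecutive members of $\{[k\beta]\}_{k<n}$ and below $[n\beta]$, again giving $b_n\leq g<[n\beta]$. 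The one point requiring care — and the main obstacle — is that this choice of $m$ presupposes a particular length-two or length-three block of $\Delta^2$-values. Such a block corresponds to a box of positive measure for the orbit $n\mapsto(\{n\alpha\},\{n\beta\})$, so it certainly occurs when $\alpha$ is transcendental (full two-dimensional equidistribution); when $\alpha$ is quadratic the orbit lies on finitely many rational-slope line segments, and one must check, over the finitely many remaining $\alpha$, that some segment meets the box (equivalently, one argues directly that $[n\alpha]+c$ avoids $\{[k\beta]\}$ for a positive proportion of admissible $n$, with $c$ the relevant constant). Carrying out this last step cleanly and uniformly is where the genuine work of the proof lies.
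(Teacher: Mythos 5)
Your sufficiency direction and the easy half of the necessity direction are fine and essentially coincide with the paper: sufficiency is Theorem~\ref{mastertheorem_expanded} plus the induction of Lemma~\ref{trivialnogaplemma}, and when $\max f-2\min f+1\geq 2$ your ``two consecutive integers cannot both be $b_k$'s'' argument is exactly the paper's observation that $\{b_k\}$ grows by at least two. The problem is the residual case $\max f=2\min f$, where every gap between consecutive intervals $I_{k-1},I_k$ has size exactly one and could in principle be plugged by some $b_j$. This is precisely where the paper does its real work: it proves a separate Gap lemma by a density count, comparing the density of indices $k$ with $2f(a_n)=f(a_k)$ (a set of positive density determined by $X^c\cap Y$ or $X\cap Y^c$) with the density of those $k$ for which the gap value $a_n+b_{k-1}-a_{k-1}+f(a_n)$ actually lands in $\{b_j\}$, and shows the latter is strictly smaller, so some size-one gap is unfilled. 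You do not prove this case: you sketch a plan (choose $m=n-1$ or $n-2$, use a Sturmian block argument) and then explicitly concede that ``carrying out this last step cleanly and uniformly is where the genuine work of the proof lies.'' That concession is accurate -- as written the proposal leaves the hardest case open, so the proof is incomplete.

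There is also a concrete error in how you delimit that residual case. You assert that $[\beta]=2$ cannot occur because $\min f\geq 1$ with $[\beta]=2$ forces $\alpha=\phi$. It does not: $\min f\geq1$ only says the value $[\beta]-2=0$ is omitted, i.e.\ $X\cap Y^c=\emptyset$, which by Niven's criterion holds for infinitely many $\alpha$ with $[\beta]=2$ besides $\phi$. For instance $\alpha=(\sqrt{17}-1)/2$, $\beta=(7+\sqrt{17})/4$ satisfies $2(1-\{\beta\})+\{\alpha\}=1$ exactly, so $f$ is two-valued with values $\{1,2\}$, $[\beta]=2$, $2\min f-\max f=0$, and $\alpha\neq\phi$; the paper's Gap lemma treats exactly this $[\beta]=2$ configuration. (Your side remark that only ``finitely many'' quadratic $\alpha$ remain to be checked is likewise off: each of the cases $[\beta]=2,3,4$ with one value of $\Delta^2$ excluded contains an infinite family of quadratic irrationals.) So beyond the admitted gap, the case analysis you would have to complete is larger than you state; the paper's density argument handles all three values of $[\beta]$ uniformly and is the missing ingredient here.
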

\begin{proof}
Theorem \ref{mastertheorem_expanded} shows that
$2\min f - \max f \geq 1$ implies the $P$-positions are pairs of the
complementary Beatty sequences parameterized by $\alpha$ after expanding
the recurrence.

For the reverse
direction, we prove its contrapositive, that
$2f\min f - \max f < 1$ implies the $P$-positions
are not complementary Beatty sequences.
Suppose $2\min f - \max f < 1$. Continue as in the
proof Theorem \ref{mastertheorem_expanded}, where a sequence of
intervals is considered. Select $n>1$ and
$0<k<n$ such that there is a gap between the endpoints of the
intervals $I_{k-1}$ and $I_k$ of maximal size.
The size of the gap equals $f(a_k)-2f(a_n)+1$. If this is
greater than one, then we can
be sure the gap is not filled by the sequence $\{b_k\}$, because this
sequence grows by at least two at a time.
If every gap has size one, apply the next lemma, which shows not
all gaps are filled in this case. In both
of these situations, the fact that a gap
appears in the intervals means that
some $b_n$ is less than
$a_n+b_{n-1}-a_{n-1}-f(a_n)$.
\begin{align}
b_n < a_n + b_{n-1} - a_{n-1} - f(a_n).
\end{align}
If $n$ is the least value for which a gap in the intervals
is left unfilled, then the lower $P$-positions are segments
of the complementary Beatty sequences parameterized by $\alpha$.
The inequality above implies
\begin{align}
b_n &\leq a_n + b_{n-1} - a_{n-1} - f(a_n)\\
&= a_n + b_{n-1} - a_{n-1} -(([n\beta]-a_n)-(b_{n-1}-a_{n-1}))
\end{align}
The Beatty element $[n\beta]$ can be written $[n\beta]=b_n+d$
where $d>0$,
since $a_n+b_{n-1}-a_{n-1}+f(a_n)=[n\beta]$. (We have assumed $n$ is
the least value for which a gap appears.) Substituting into the
previous inequality shows
\begin{align}
b_n &\leq a_n + b_{n-1} - a_{n-1} - b_n-d+a_n+b_{n-1}-a_{n-1}\\
2b_n - 2a_n &\leq 2(b_{n-1}-a_{n-1})-d\\
2(b_n-a_n)-2(b_{n-1}-a_{n-1}) &\leq -d.
\end{align}
However, the study of the difference function as in
Lemma \ref{h_function_lemma} reveals that this quantity
is non-negative for complementary Beatty sequence $\{a_k\}$
and $\{b_k\}$. Therefore the sequence are not complementary
Beatty sequences.
\end{proof}

\begin{lemma}[Gap lemma]
Let $f$ be a Beatty constraint function satisfying the property that
any gap between the intervals
\begin{align}
I_{k-1} &= [a_n \!+\! b_{k-1}\!-\!a_{k-1} - f(a_n) \!+\! 1,
		a_n \!+\! b_{k-1}\!-\!a_{k-1} + f(a_n)\!-\!1]\\
I_k &= [a_n \!+\! b_k\!-\!a_k - f(a_n) \!+\! 1,
		a_n \!+\! b_k\!-\!a_k + f(a_n)\!-\!1]
\end{align}
has size $f(a_k)-2f(a_n)+1$ exactly equal to one. Then there exists
$n>1$ and $0<k<n$ such that the gap at $I_{k-1}$ is left unfilled by
any $b_j$. In symbols:
\begin{align}
a_n + b_{k-1}-a_{k-1}+f(a_n) \notin \{b_j\}_{j<n}.
\end{align}
\end{lemma}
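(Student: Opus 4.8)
The plan is to locate the first index at which the interval collection $\{I_k\}$ from Lemma~\ref{doublemex} is \emph{forced} to acquire a gap, and then to show the single integer sitting in that gap is an $\alpha$-Beatty value, which by Theorem~\ref{rayleigh} is therefore distinct from every $b_j=[j\beta]$.

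I would first unwind the hypothesis. Requiring every gap to have size exactly one forces $\max f=2\min f$: the gap between $I_{k-1}$ and $I_k$ equals $f(a_k)-2f(a_n)+1$, which is positive exactly when $f(a_k)\ge 2f(a_n)$, and demanding it equal $1$ there — with $k,n$ realizing $\max f$ and $\min f$ — gives $\max f=2\min f$. A constant Beatty constraint function equals $[\beta]-1\ge 1$ by Lemma~\ref{h_function_lemma}, so $f$ is non-constant; since the range of $f=\Delta^2$ lies in $\{[\beta]-2,[\beta]-1,[\beta]\}$ by the same lemma, the only surviving possibilities are $(\,[\beta],\min f,\max f\,)\in\{(2,1,2),(3,1,2),(4,2,4)\}$. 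Write $\mu=\min f$, let $k_0$ be the least index with $f(a_{k_0})=\max f$, and let $n_0$ be the least index $>k_0$ with $f(a_{n_0})=\min f$; both exist since every value of $\Delta^2$ recurs infinitely often (Theorem~3.11 of~\cite{niven}, invoked as in Lemma~\ref{h_function_lemma}).

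Next I would verify $(a_m,b_m)=([m\alpha],[m\beta])$ for all $m<n_0$. For such $m$, $2f(a_m)-f(a_j)\ge 1$ whenever $0<j<m$: if $f(a_m)>\mu$ this is $\ge 2(\mu+1)-2\mu=2$, and if $f(a_m)=\mu$ then minimality of $n_0$ together with $f(a_{k_0})=\max f\neq\mu$ forces $m<k_0$, so no $j\le m$ has $f(a_j)=\max f$, whence $f(a_j)\le 2\mu-1$ and $2f(a_m)-f(a_j)\ge 1$. Applying Theorem~\ref{mastertheorem_expanded} inductively on $m$ gives $b_m=f(a_m)+b_{m-1}+a_m-a_{m-1}$; combined with $a_m=\mex\{a_j,b_j\mid j<m\}=[m\alpha]$ (a property of complementary Beatty sequences) and the telescoping $b_m-a_m=\sum_{j=1}^m\Delta^2_{j-1}=[m\beta]-[m\alpha]$, this yields the claim. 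Running the $b_{n_0}$ computation of Lemma~\ref{doublemex}: since $\{b_k-a_k\}_{k<n_0}$ is strictly increasing the $I_k$ are ordered by $k$, a gap appears between $I_{k-1}$ and $I_k$ exactly when $f(a_k)=\max f$, and it then consists of the single integer $a_{n_0}+b_{k-1}-a_{k-1}+f(a_{n_0})$. For $0<j<k_0$ we have $f(a_j)\le\max f-1$, so $I_0\cup\dots\cup I_{k_0-1}$ is the contiguous block $[a_{n_0},g-1]$ with
\[
g:=a_{n_0}+b_{k_0-1}-a_{k_0-1}+\mu=[n_0\alpha]+\bigl([(k_0-1)\beta]-[(k_0-1)\alpha]\bigr)+\mu ,
\]
and $g<[n_0\beta]$ since the top interval $I_{n_0-1}$ ends at $[n_0\beta]-1$. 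Hence $b_{n_0}\ge g$, and $g\notin\{b_j\}_{j<n_0}=\{[j\beta]\mid j<n_0\}$ would already prove the lemma with $n=n_0$, $k=k_0$; so it suffices to show that $g$ — or, should $g$ be a $\beta$-Beatty value, one of the later gap integers below $[n_0\beta]$ — equals some $[m\alpha]$, i.e.\ by Theorem~\ref{rayleigh} is not a $\beta$-Beatty value.

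The heart of the argument is this identification, carried out with the explicit preimages of $[\beta]-2,[\beta]-1,[\beta]$ under $\Delta^2$ from Lemma~\ref{h_function_lemma}. Take the representative case $[\beta]=3$, which includes Figure~\ref{figure1}: here $f(a_1)=[\beta]-[\alpha]=2=\max f$, so $k_0=1$ and $g=[n_0\alpha]+1$. From $f(a_{n_0})=1$, that is $2+d_\alpha(n_0-1)=1$, we get $d_\alpha(n_0-1)=-1$, so $n_0-1\in X=\{[n/\{\alpha\}]\}$ by Lemma~\ref{h_function_lemma}; and since $\{\alpha\}=\alpha-1\in(1/3,1/2)$ the slope $1/\{\alpha\}$ lies in $(2,3)$, so $X$ contains no two consecutive integers, forcing $n_0\notin X$, i.e.\ $[(n_0+1)\alpha]-[n_0\alpha]=[\alpha]=1$. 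Therefore $g=[n_0\alpha]+1=[(n_0+1)\alpha]$ is an $\alpha$-Beatty value, completing this case with $n=n_0$, $k=k_0=1$. The remaining possibilities go the same way in outline: $(2,1,2)$ is in fact vacuous among irrational slopes, because the pattern forces one homogeneous Beatty sequence to be a proper subset of another, which can occur only with an integer slope ratio and hence $\alpha\le 3/2$, incompatible with $[\beta]=2$; and for $(4,2,4)$ one uses $b_{k_0-1}-a_{k_0-1}=\sum_{0<j<k_0}f(a_j)$ and the descriptions of the sets $X$ and $Y$ to step through the successive gap integers below $[n_0\beta]$ — and through later indices $n>n_0$ if necessary — until one lands on an $\alpha$-value. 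I expect the $(4,2,4)$ case to be the main obstacle to a fully uniform proof.
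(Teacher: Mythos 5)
Your plan is genuinely different from the paper's proof: the paper argues globally by contradiction, assuming every size-one gap is filled and then comparing, for each of the cases $[\beta]=2,3,4$, the asymptotic density of the index set $G=\{0<k<n : 2f(a_n)=f(a_k)\}$ with that of the subset $H$ of indices whose gap integer equals some $b_j$, obtaining $|H|<|G|$ for large $n$; you instead localize the first forced gap and try to identify its gap integer as an $\alpha$-Beatty value. Your reduction to $([\beta],\min f,\max f)\in\{(2,1,2),(3,1,2),(4,2,4)\}$, your inductive use of Theorem \ref{mastertheorem_expanded} to get $(a_m,b_m)=([m\alpha],[m\beta])$ below the first gap, and your treatment of $[\beta]=3$ (where $k_0=1$ and $g=[n_0\alpha]+1=[(n_0+1)\alpha]$) are sound. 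However, the proposal has two genuine gaps, so it does not yet prove the lemma.

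First, the dismissal of the case $(2,1,2)$ rests on a false claim: proper containment $X\subsetneq Y$ of the homogeneous Beatty sequences $X=\{[n/\{\alpha\}]\}$ and $Y=\{[n/\{\beta\}]\}$ does \emph{not} force an integer slope ratio. By the very criterion the paper invokes (Theorem 3.13 of \cite{niven}), $X\cap Y^c=\emptyset$ holds whenever $p(1-\{\beta\})+q\{\alpha\}=1$ has a solution in positive integers; for example $\alpha=(\sqrt{17}-1)/2$, $\beta=(7+\sqrt{17})/4$ satisfies this exactly with $p=2$, $q=1$, giving $[\beta]=2$ and a Beatty constraint function that is genuinely two-valued with values $1$ and $2$ and $\max f=2\min f$. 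So the case you declared vacuous is precisely one of the situations the lemma must handle (the paper's density comparison covers it). Second, the case $(4,2,4)$ is left as a sketch by your own admission: there $f(a_1)=[\beta]-1=3$ is neither $\min f$ nor $\max f$, so $k_0>1$ and the clean identification $g=[n_0\alpha]+1$ has no analogue; it is not evident that the first (or indeed any particular) gap integer is an $\alpha$-value, and making your "step through successive gap integers" idea rigorous would require either a Sturmian analysis of the preimages $X^c\cap Y$ and $X\cap Y^c$ or a counting argument of the paper's type. With two of the three cases unestablished, the argument is incomplete.
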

\begin{proof}
Suppose conversely that for every $n>1$ and $0<k<n$
satisfying $f(a_k)-2f(a_n)+1=1$, there exists
$b_j$ such that $a_n+b_{k-1}-a_{k-1}+f(a_n)=b_j$.
Then for any $n>1$, the cardinality of the set
\begin{align}
G = \{ 0\!<\! k \!<\! n \;|\; 2f(a_n)=f(a_k)\}
\end{align}
equals that of the set
\begin{align}
H \!=\! \{ 0\!<\! k \!<\! n \;|\; 2f(a_n)\!=\!f(a_k) \;\text{and}\;
		a_n \!+\! b_{k-1}\!-\!a_{k-1}\!+\!f(a_n)\!=\!b_j\}.
\end{align}

By Lemma \ref{trivialnogaplemma}, we can assume $[\beta]=2,3,4$. Recall $f$ achieves the
the following values:
\begin{align}
f(a_n) = 
\begin{cases}
[\beta] & n-1 \in X^c\cap Y\\
[\beta]-2 & n-1 \in X\cap Y^c\\
[\beta]-1 &\text{otherwise}
\end{cases}
\end{align}
where $X=\{[n/\{\alpha\}]\}$ and $Y=\{[n/\{\beta\}]\}$.
By performing casework, if
$2f(a)=f(a')$ and $f\neq 0$, then one of the following is true:
\begin{align}
\begin{cases}
2[\beta] = [\beta]-1 &\implies \beta<0 \\
2[\beta] = [\beta]-2 &\implies \beta < 0\\
2([\beta]-1) = [\beta] &\implies [\beta]=2\\
2([\beta]-1) = [\beta]-2 &\implies [\beta]=0\\
2([\beta]-2) = [\beta] &\implies [\beta]=4\\
2([\beta]-2) = [\beta]-1 &\implies [\beta]=3.
\end{cases}
\end{align}
This shows that we need only consider the cases
$[\beta]=2,3,4$. It also shows for a given Beatty constraint function $f$
that the equality $2f(a)=f(a')$ is achieved for a unique pair of values
$f(a)$ and $f(a')$. Assume $[\beta]=2$.
The casework showed we require $f(a_n)=1$ and $f(a_k)=2$.
The value $f(a_k)=2$ is achieved if and only if $k-1$ lies in the
sequence $X^c\cap Y$. Note that this sequence has density
$(1-\{\alpha\})\{\beta\}$.
The sequence $\{b_{k-1}-a_{k-1}\}_{\geq 1}$ has density $1/(\beta-\alpha)$,
which means for fixed $n$ that the sequence
$\{b_{k-1}-a_{k-1}+a_n+f(a_n)\}_{k\geq 1}$ has the same density.
After taking into account the density of the sequence $\{b_n\}$, one
deduces
\begin{align}
\lim_{\stackrel{n\to\infty}{f(a_n)=1}}\frac{|H|}{n-1}
&\leq (1-\{\alpha\})\{\beta\}/((\beta-\alpha)\beta)\\
&< (1-\{\alpha\})\{\beta\} =
	\lim_{\stackrel{n\to\infty}{f(a_n)=1}}\frac{|G|}{n-1}
\end{align}
which indicates $|H|<|G|$ for sufficiently large $n$.

For $[\beta]=4$, we require $f(a_n)=2$ and $f(a_k)=4$,
so that $k-1\in X^c\cap Y$, which permits the same proof.
For $[\beta]=3$, we require $f(a_n)=2$ and $f(a_k)=1$ so that
$k-1\in X\cap Y^c$. To adapt the argument in this case,
replace the density $(1-\{\alpha\})\{\beta\}$ with
$\{\alpha\}(1-\{\beta\})$ after considering the pre-images
of each value $f$ achieves.

Because $|H|<|G|$, for sufficiently large $n$ in any case, we know that the
sets $H$ and $G$ are not equal. Therefore, there is a gap of size one between
two of the intervals which is left unfilled by any $b_j$.
\end{proof}

The next lemma demonstrates that Beatty $P$-positions
necessarily arise from Beatty constraint functions; i.e.,
the inverse problem has a unique solution which, if it exists, coincides with
a candidate function.
\begin{lemma}\label{injectivity}
Suppose a game in Definition \ref{modifiedgame} is played with the
constraint function $f$ and moreover that the $P$-positions agree with pairs
of the complementary Beatty sequences $\{a_n\}$
and $\{b_n\}$ parameterized by $\alpha$.
Then $f$ agrees with the function $\widetilde{f}$ in Equation
\eqref{modified_beatty_constraint_function} over all $a_n$ with $n$
positive.
\end{lemma}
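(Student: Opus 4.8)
The plan is to extract from Lemma~\ref{doublemex} two opposing inequalities for $f(a_n)$ that together pin it down. Write $\widetilde f(a_n) := ([n\beta]-[(n-1)\beta])-([n\alpha]-[(n-1)\alpha])$ for the value prescribed by Equation~\eqref{modified_beatty_constraint_function}, and abbreviate $c_j := [j\beta]-[j\alpha]$, so $c_0 = 0$, $c_j - c_{j-1} = \widetilde f(a_j)$, and $\widetilde f(a_j) \ge 0$ by the remark following Lemma~\ref{h_function_lemma} (recall $1 < \alpha < 2 < \beta$). Since the game's $P$-positions are the Beatty pairs and, by Lemma~\ref{doublemex}, the recurrence there also produces the $P$-positions, uniqueness of $P$-positions forces $(a_j, b_j) = ([j\alpha], [j\beta])$ for all $j$; and, using Remark~\ref{dependence} to write $f(a_k, b_k, a_n) = f(a_n)$, the $b_n$-clause of Lemma~\ref{doublemex} becomes
\[
b_n = \min\{\, b \ge a_n : b \neq b_k,\ |(b-b_k)-(a_n-a_k)| \ge f(a_n)\ \forall\, 0 \le k < n \,\}.
\]

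First I would check $f(a_n) \ge 1$ for every $n \ge 1$: if $f(a_n) \le 0$, the inequality constraint above is vacuous, so $b_n$ is just the least integer $\ge a_n$ outside $\{b_0, \dots, b_{n-1}\}$; but $a_n = [n\alpha]$ lies in $A \setminus \{0\}$ while each $b_k$ lies in $B$ and $A \cap B = \{0\}$, so $a_n \notin \{b_k\}_{k<n}$ and thus $b_n = a_n$, contradicting $b_n = [n\beta] \neq [n\alpha]$.

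Next I would produce the two inequalities. Applying $b_n \in S$ with $k = n-1$ (and $k = 0$, $(a_0, b_0) = (0,0)$, when $n = 1$) gives $\widetilde f(a_n) = |c_n - c_{n-1}| = |(b_n - b_{n-1}) - (a_n - a_{n-1})| \ge f(a_n)$; together with the previous step, $\widetilde f(a_j) \ge f(a_j) \ge 1$ for all $j \ge 1$, so $\{c_j\}$ is strictly increasing and $c_n - c_k \ge \widetilde f(a_n) \ge 1$ for all $0 \le k < n$, with equality exactly at $k = n-1$. Conversely, $b_n - 1 \ge a_n$ (as $b_n > a_n$) and $b_n - 1 \notin \{b_k\}_{k<n}$ (the sequence $B$ has consecutive gaps at least $[\beta] \ge 2$), so minimality of $b_n$ forces some $0 \le k < n$ with $|(c_n - c_k) - 1| = |(b_n - 1 - b_k) - (a_n - a_k)| < f(a_n)$; since $(c_n - c_k) - 1 \ge \widetilde f(a_n) - 1 \ge 0$, this yields $\widetilde f(a_n) - 1 < f(a_n)$, i.e.\ $\widetilde f(a_n) \le f(a_n)$. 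Hence $f(a_n) = \widetilde f(a_n)$ for every $n \ge 1$.

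There is no hard idea here, only bookkeeping, and the one thing to get right is the logical order: the strict monotonicity of $\{c_j\}$ — which is what makes the minimal gap $c_n - c_k$ occur at $k = n-1$ and hence makes \emph{both} inequalities tight — is only available after establishing $\widetilde f \ge f \ge 1$, so the exclusion of $f(a_n) \le 0$ must come first. One should also track the $n = 1$ and $k = 0$ boundary cases against the seed $(a_0, b_0) = (0,0)$, and note that dropping the absolute value in $|c_n - c_{n-1}| = \widetilde f(a_n)$ is precisely the nonnegativity assertion of the remark after Lemma~\ref{h_function_lemma}.
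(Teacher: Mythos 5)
Your proof is correct, and it takes a genuinely different route from the paper's. The paper proves the lemma by induction on $n$ using the interval reformulation of Lemma \ref{doublemex}: writing $b_N$ as a restricted mex of $\{b_k\}$ together with the intervals $I_k$, it splits into a ``no gap'' case (where solving the recurrence gives $f(a_N)=\widetilde f(a_N)$ at once) and a ``gap'' case, where the inductive hypothesis yields $f(a_N)\geq f(a_{k+1})$ while the gap-size computation from Theorem \ref{mastertheorem_expanded} yields $2f(a_N)\leq f(a_{k+1})$, forcing the contradiction $f=0$. You avoid both the induction and the interval bookkeeping by testing two specific values against the minimization defining $b_n$: membership of $b_n$ itself at $k=n-1$ gives $\widetilde f(a_n)\geq f(a_n)$, and non-membership of $b_n-1$ (which is $\geq a_n$ and avoids $\{b_k\}$ because consecutive $b$'s differ by at least $[\beta]\geq 2$) gives $\widetilde f(a_n)\leq f(a_n)$, once the preliminary step $f(a_n)\geq 1$ rules out vacuous constraints. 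Your argument is shorter and more self-contained, and it makes explicit the positivity of $f$ that the paper only gestures at (``recall that if $f=0$ at any point, then the $P$-positions are not complementary''); the paper's version, on the other hand, reuses the interval/gap machinery that drives Theorem \ref{mastertheorem_expanded} and the Gap Lemma, so it integrates more tightly with the surrounding section. The only point worth a sentence in a polished write-up is the index-matching step, i.e.\ that set equality of the $P$-positions with the Beatty pairs forces $(a_j,b_j)=([j\alpha],[j\beta])$ term by term (both lists are strictly increasing in the first component); the paper makes the same identification silently, so this is a presentational remark rather than a gap.
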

\begin{proof}
Let $1<\alpha<2<\beta$ satisfy $1/\alpha+1/\beta=1$.
Denote the $P$-positions by
\begin{align}
\{(a_n,b_n)\} = \{([n\alpha], [n\beta])\}.
\end{align}
To proceed by induction, suppose
$f(a_n)=\widetilde{f}(a_n)$ for all $n<N$. The $N$th
$P$-position is found be evaluating
\begin{align}
a_N &= \mex\{a_k,b_k\}_{k<N}\\
b_N &= \mex_{\geq a_N}\{b_k\}_{k<N}
    \cup[a_N + b_k\!-\!a_k - f(a_N)\!+\!1,
    a_N + b_k\!-\!a_k + f(a_N)\!-\!1]_{k<N}.
\end{align}
If there is no gap in the intervals, then the mex can be computed
by adding one to the greatest interval's right-endpoint, to obtain
\begin{align}
b_N = a_N + b_{N-1}-a_{N-1}+f(a_N).
\end{align}
This implies $f(a_N) = (b_N-a_N)-(b_{N-1}-a_{N-1})=\widetilde{f}(a_N)$,
which is the desired result.
Otherwise, there is a gap between the intervals $I_k$ and $I_{k+1}$, which implies that
\begin{align}
b_N = a_N + b_k-a_k + f(a_N)
\end{align}
and
\begin{align}
f(a_N) = (b_N-a_N)-(b_k-a_k) = \sum_{j=k+1}^N \widetilde{f}(a_j)
\geq f(a_{k+1}).
\end{align}
But the existence of a gap implies that $2f(a_N)-f(a_{k+1})\leq 0$
or that $f(a_N)\leq f(a_{k+1})/2$. The
joint inequality $f(a_{k+1})\leq f(a_N)
\leq f(a_{k+1})/2$ implies
$f(a_{k+1})=f(a_N)=0$, which contradicts the definition of
$f$ (recall that if $f=0$ at any point, then the $P$-positions are not complementary).
Therefore, there is no gap in the intervals, which implies
$f(a_n)=\widetilde{f}(a_n)$ for all positive $n$.
\end{proof}

The next theorem extends the discussion preceding Lemma \ref{h_function_lemma}
to games in the family of Definition \ref{modifiedgame} and determines the range of
possible Beatty sequences for the forward problem. Figure
\ref{modifiedbeattyirrationals} illustrates the set of permitted $\alpha$.
\begin{theorem}[Unifying Theorem]\label{beattymodified2pilesaregolden}
Let a game in Definition \ref{modifiedgame} have $P$-positions which are
the complementary Beatty sequences parameterized by $\alpha$. Then
one of the following is true:
\begin{enumerate}
\item[(i)] $\alpha=\goldenformula$ for some integer $t>0$,
\item[(ii)] $[\beta]=3$ or $[\beta]=4$ and there exist integers,
$p,q>0$ such that
\begin{align}
\alpha = \frac{\sqrt{4pq \!+\! ([\beta]p\!-\!1)^2}
	+ 2q \!-\! ([\beta]p \!-\! 1)}{2q}, \label{34family}
\end{align}
\item[(iii)] $[\beta]=4$ and there exist integers $p,q>0$ such that
\begin{align}
\alpha = \frac{\sqrt{4pq + (q \!-\! 3p \!-\! 1)^2} + 3q \!-\! 3p \!-\! 1}{2q}, \label{4family}
\end{align}
\item[(iv)] $[\beta]\geq 5$ or, equivalently, $\alpha<5/4$.
\end{enumerate}
\end{theorem}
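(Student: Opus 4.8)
The plan is to chain together the two structural facts about Beatty constraint functions proved above. By Lemma~\ref{injectivity}, if the $P$-positions of a game in Definition~\ref{modifiedgame} are the complementary Beatty sequences parameterized by $\alpha$, then the game's constraint function is, on every $a_n$ with $n>0$, the Beatty constraint function $\widetilde f$ of Equation~\eqref{modified_beatty_constraint_function}, so that $\widetilde f(a_n)=\Delta^2_{n-1}$ in the notation of Equation~\eqref{deltasquare}. Thus the game is, for the purpose of its $P$-positions, played with $\widetilde f$, and the lemma stating that a Beatty constraint function yields complementary Beatty $P$-positions if and only if $2\min f-\max f\geq 1$ applies: having complementary Beatty $P$-positions forces $2\min\widetilde f-\max\widetilde f\geq 1$. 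The theorem therefore reduces to classifying the irrationals $1<\alpha<2$ (recall $\alpha<2$ is automatic from $\conjugate$ and $\alpha<\beta$) for which $\widetilde f$ satisfies $2\min\widetilde f-\max\widetilde f\geq 1$.

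First I would dispose of $\alpha<5/4$, equivalently $[\beta]\geq 5$: this is case (iv), and Lemma~\ref{trivialnogaplemma} in fact realizes every such $\alpha$, so nothing needs to be checked. For $\alpha\geq 5/4$ we have $[\beta]\in\{2,3,4\}$, and Lemma~\ref{h_function_lemma} says $\widetilde f$ takes values in a subset of $\{[\beta]-2,[\beta]-1,[\beta]\}$, attaining $[\beta]-2$ exactly on the indices $n-1\in X\cap Y^c$, attaining $[\beta]$ exactly on $n-1\in X^c\cap Y$, and $[\beta]-1$ otherwise, with $X=\{[n/\{\alpha\}]\}_{n\geq 0}$ and $Y=\{[n/\{\beta\}]\}_{n\geq 0}$. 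Since $\widetilde f\geq 0$ (the remark after Lemma~\ref{h_function_lemma}) together with $2\min\widetilde f-\max\widetilde f\geq 1$ force $\widetilde f\geq 1$, the value $0$ never occurs.

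Now run the finite casework on the value set $V\subseteq\{[\beta]-2,[\beta]-1,[\beta]\}$ of $\widetilde f$. If $V$ is a singleton, $\widetilde f$ is constant, hence (being monotone) equal to $[\beta]-1$ by Lemma~\ref{h_function_lemma}, and then $\alpha=\goldenformula$ with $t=[\beta]-1$ by that lemma's (ii)$\iff$(iii): this is case (i) (in particular $[\beta]=2$ forces $\alpha=\phi$, since the two-valued options below are unavailable there). A three-element $V$ would require $2([\beta]-2)-[\beta]=[\beta]-4\geq 1$, impossible for $[\beta]\leq 4$. The set $V=\{[\beta]-2,[\beta]\}$ is impossible, since $0\in X\cap Y$ forces $(X\cap Y)\cup(X^c\cap Y^c)\neq\emptyset$, hence $[\beta]-1\in V$ always. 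There remain two two-valued possibilities: $V=\{[\beta]-1,[\beta]\}$, which means $\widetilde f$ omits $[\beta]-2$ and attains $[\beta]$, i.e.\ $X\subsetneq Y$, and whose inequality $2([\beta]-1)-[\beta]=[\beta]-2\geq 1$ forces $[\beta]\in\{3,4\}$; and $V=\{[\beta]-2,[\beta]-1\}$, which means $\widetilde f$ omits $[\beta]$ and attains $[\beta]-2$, i.e.\ $Y\subsetneq X$, and whose inequality $2([\beta]-2)-([\beta]-1)=[\beta]-3\geq 1$ forces $[\beta]=4$. These two cases must become cases (ii) and (iii).

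The main obstacle is this last conversion, from the containments ``$X\subsetneq Y$'' and ``$Y\subsetneq X$'' to the explicit formulas \eqref{34family} and \eqref{4family}. The plan is to use $\{\beta\}=1/\{\alpha\}-([\beta]-1)$ (immediate from $\conjugate$ and $[\alpha]=1$), which exhibits $X$ and $Y$ as the Beatty sets of the slopes $1/\{\alpha\}$ and $1/\{\beta\}$, related by one step of the continued-fraction (Gauss) map; one then shows, by a direct floor computation (or by invoking a standard criterion for one Beatty set to be contained in another), that $X\subsetneq Y$ holds if and only if there exist positive integers $p,q$ with $p\{\beta\}=q\{\alpha\}+(p-1)$, and symmetrically that $Y\subsetneq X$ holds if and only if $q\{\alpha\}=p\{\beta\}+(q-1)$ for some positive integers $p,q$. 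Substituting $\{\alpha\}=\alpha-1$ and $\{\beta\}=1/(\alpha-1)-([\beta]-1)$ and clearing the denominator turns each relation into a quadratic in $\alpha$ with leading coefficient $q$; its positive root (the relevant one, since $\{\alpha\}>0$) is exactly \eqref{34family} for the first relation --- legal only when $[\beta]\in\{3,4\}$ --- and exactly \eqref{4family} for the second, which is relevant only when $[\beta]=4$. Only the ``only if'' direction of these equivalences is needed for the theorem; the ``if'' direction, together with the characterization lemma, shows conversely that every $\alpha$ listed in (i)--(iv) actually arises, which I would record as a remark. I expect verifying the containment criterion to be the real work; solving the quadratics and choosing signs in the quadratic formula is routine.
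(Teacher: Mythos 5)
Your proposal is correct and follows essentially the same route as the paper: reduce via Lemma~\ref{injectivity} and the necessity of $2\min f-\max f\geq 1$ to a casework on which of the values $[\beta]-2,[\beta]-1,[\beta]$ the Beatty constraint function attains, then convert the resulting containments $X\cap Y^c=\emptyset$ and $X^c\cap Y=\emptyset$ into the quadratics yielding \eqref{34family} and \eqref{4family}. The ``standard criterion'' you defer to is exactly Theorem 3.13 of Niven cited in the paper, and your conditions $p\{\beta\}=q\{\alpha\}+(p-1)$ and $q\{\alpha\}=p\{\beta\}+(q-1)$ are just rearrangements of the paper's $p(1-\{\beta\})+q\{\alpha\}=1$ and $p\{\beta\}+q(1-\{\alpha\})=1$; your only (harmless) deviation is ruling out the value set $\{[\beta]-2,[\beta]\}$ outright rather than folding it into the three-valued inequality.
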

\begin{proof}
If $f\equiv t$ for some $t>0$, then $\alpha$ is as in case (i) by Lemma
\ref{h_function_lemma}. Otherwise, the previous lemmata showed we require
that $f$ is a Beatty constraint function as in Definition
\ref{modified_beatty_constraint_function} and that $2\min f -\max f\geq 1$.

To proceed, we 
inspect the possible values of $f$ as $\alpha$ values,
some of which indicate this inequality  is true.
Recall the trichotomy Equation \eqref{trichotomy}, which says that if 
$X=\{[n/\{\alpha\}]\}$ and $Y=\{[n/\{\beta\}]\}$, then
\begin{align}
d_\alpha(n) = \begin{cases}
0 & n \in (X\cap Y)\cup (X^c\cap Y^c)\\
1 & n\in X^c\cap Y\\
-1 & n\in X\cap Y^c
\end{cases} \!\implies\! f = \begin{cases}
[\beta]-1\\
[\beta]\\
[\beta]-2.
\end{cases} \label{trichotomy_f}
\end{align}
If $f$ achieves more than one value, then $f$ achieves each of those values 
infinitely many times as discussed in Lemma \ref{h_function_lemma}. Therefore
we can determine the minimum and maximum by looking at the three values in
Equation \eqref{trichotomy_f}.

Suppose now that $f$ is three-valued. We then require
\begin{align}
2([\beta]-2)-[\beta] \geq 1 \iff [\beta]\geq 5.
\end{align}
This case was handled in Lemma \ref{trivialnogaplemma}.
Therefore suppose $f$ achieves only two values.
If those two values are $[\beta]-2$ and $[\beta]$,
then we require the same inequality, which, again, was already handled.
If $f$ achieves only the values $[\beta]-2$ and $[\beta]-1$, then we require
\begin{align}
2([\beta]-2)-([\beta]-1) \geq 1 \iff [\beta]\geq 4.
\end{align}
If $f$ achieves only the values $[\beta]-1$ and $[\beta]$, then we require
\begin{align}
2([\beta]-1)-[\beta] \geq 1 \iff [\beta]\geq 3.
\end{align}
Therefore, if $[\beta]=3$ or $[\beta]=4$, then the inequality
$2\min f - \max f \geq 1$ is satisfied only when $f$ excludes one or
more of its possible values from its range.

To handle these cases in an explicit fashion, start by letting $[\beta]=3$ or 
$[\beta]=4$ and exclude the value $f=[\beta]-2$ by setting $X\cap 
Y^c=\emptyset$. Theorem 3.13 of \cite{niven} states this is equivalent to the 
existence of integers $p,q>0$ solving $p(1-\{\beta\})+q\{\alpha\}=1$. Note that
\begin{align}
\frac{1}{\alpha} + \frac{1}{\beta} = 1
\end{align}
implies $\beta=\alpha/(\alpha-1)=(1+\{\alpha\})/\{\alpha\}$. Therefore
\begin{align}
\{\beta\} &= (1+\{\alpha\})/\{\alpha\}-[\beta] \quad\text{and}\\
1 - \{\beta\} &= [\beta] - \frac{1}{\{\alpha\}}.
\end{align}
Manipulating the equation $p\left([\beta] - 1/\{\alpha\}\right)
	+ q\{\alpha\} = 1$, one arrives at the quadratic equation
\begin{align}
q\alpha^2 + ([\beta]p \!-\! 1 \!-\! 2q)\alpha + q\!-\!p
	\!+\! ([\beta]p\!-\!1) = 0. \label{34polynomial}
\end{align}
The quadratic formula yields a solution in the interval $(1,2)$
\begin{align}
\alpha = \frac{\sqrt{4pq + ([\beta]p\!-\!1)^2} + 2q - ([\beta]p \!-\! 1)}{2q}.
\end{align}

Finally, we let $[\beta]=4$ and exclude the value $f=[\beta]$ by
setting $X^c\cap Y=\emptyset$. Equivalently, there exist integers $p,q>0$
such that
\begin{align}
p\{\beta\} + q(1-\{\alpha\}) = 1.
\end{align}
Similar to the previous derivation, the quadratic equation
\begin{align}
q\alpha^2 + (3p-3q+1)\alpha + (2q-4p-1)=0 \label{4polynomial}
\end{align}
arises and the quadratic formula yields the solution
\begin{align}
\alpha = \frac{\sqrt{4pq + (q \!-\! 3p \!-\! 1)^2} + 3q \!-\! 3p \!-\! 1}{2q}.
\end{align}
In each of the above cases $2\min f - \max f \geq 2$, and
Theorem \ref{mastertheorem_expanded} implies the equality
$b_n = a_n + b_{n-1}-a_{n-1} + f(a_n)$ is always true.

The preceding casework is exhaustive, yielding the statement.
Figure \ref{modifiedbeattyirrationals} displays visually the
family of irrationals derived.
\end{proof}

\begin{figure}[t]
\begin{tikzpicture}
\begin{axis}[
xtick style={draw=none},
xtick={1, 5/4, 4/3, sqrt(2), (1+sqrt(5))/2, 2},
xmin=1,xmax=2,
ymin=0,ymax=1,
hide y axis,
axis x line*=bottom,
height=0.75*\textwidth/3]
\input{drawroutine}
\end{axis}
\end{tikzpicture}
\caption{Black: irrationals in case (i) of Theorem
\ref{beattymodified2pilesaregolden}.
Red: irrationals in case (ii) with $[\beta]=3$. Green:
irrationals in case (ii) with $[\beta]=4$. Blue:
irrationals in case (iii). Yellow: irrationals in case (iv).}
\label{modifiedbeattyirrationals}
\end{figure}

If $p=q=1$ and $t=[\beta]-1$ are
substituted into Equation 
\eqref{34family}, then the special case of the formula in Equation
\eqref{goldenfamily} appears. Moreover,
setting $q=p$, $m=[\beta]p-1$, and $[\beta]=3,4$
in Equation \eqref{34family}, one can deduce a family of
Definition \ref{modifiedgame} games whose
$P$-positions are pairs of the complementary Beatty sequences
parameterized by
\begin{align}
\alpha &= \frac{\sqrt{m^2 \!+\! 4p^2}
	\!+\! 2p \!-\! m}{2p}.
\end{align}
This is precisely the irrational family mentioned in Section 1.6 of
\cite{larsson_restrictions} and the family of Section 1.4.5 in
\cite{larsson2pile}.
In those papers, the family of irrationals is made in reference to a
game whose $P$-positions are ``close'' to pairs of the complementary
Beatty sequences parameterized by $\alpha$.
The preceding theorem shows that if $m=3p-1$ or $m=4p-1$, then
the $P$-positions may be
brought even closer (to a distance of zero) by playing a game in Definition
\ref{modifiedgame} with the Beatty constraint function parameterized
by $\alpha$. This is in contrast with the statement of the appendix in the
same paper, which states that for $p>1$, there exists no game of
a certain type with $P$-positions which are the complementary Beatty pairs.
The appendix also makes reference to a special family of irrationals
\begin{align}
r = \frac{\sqrt{4p^2+1}+2p-1}{2p}.
\end{align}
This family can be deduced by the same method the family Equation
\eqref{4family} was derived by instead setting $[\beta]=2$ and
letting $q=p$.

In \cite{larssonweimer}, an invariant family of games is defined
whose $P$-positions arise as pairs of the complementary Beatty sequences
parameterized by
\begin{align}
\alpha_k = [1; k, 1, k, 1, k, \dots]
= \frac{1+\sqrt{1+4/k}}{2} = \frac{\sqrt{4k+k^2}+k}{2k}
\end{align}
for all $k\geq 1$.
For any $[\beta_k]\geq 2$, we can substitute
$p=1$ and $q=[\beta_k]-1=k$ into Equation \eqref{34family} to arrive at the
same formula.

It is for the previous examples
that we view the family of quadratic irrationals in the previous theorem
statement as a kind of unifying
family. A special property is that $\Delta^2$ is two valued for these $\alpha$.

The key to our solution to Fraenkel's inverse problem
is to observe that the disagreement of $b_n$ 
with the recurrence formula depends (in a sufficient way)
on the intervals $I_k$ mentioned in the
proof 
of Theorem \ref{mastertheorem_expanded} having gaps between them. To instead be
sure that the intervals always overlap, we can replace each interval's
lower endpoint with $-\infty$. This has the advantage of also
removing the problematic absolute value symbol which yields non-complementary
$P$-positions for vanishing constraint functions.
The next game family is introduced as Definition 
\ref{relaxedgame}. As a corollary, we deduce a family of
games whose $P$-positions are any desired complementary Beatty sequences.

\section{Relaxed Wythoff}
In this section, a third, novel family of games is defined. This family
accepts a constraint function and has three possible move types.
It is shown that some game in this family has $P$-positions which
are pairs of any desired complementary Beatty sequences.
\begin{definition}[Relaxed Wythoff]\label{relaxedgame}
Given two piles of tokens $(x,y)$ of sizes $x,y$, with $0\leq x\leq y<\infty$. 
Two players alternate removing tokens from the piles in one of the
following ways:
\begin{enumerate}
\item[(aa)] Remove any positive number of tokens from a single pile,
    possibly the entire pile.
\item[(bb)] Remove more tokens from the smaller pile than from the larger
	pile. (A special case of the next move type.)
\item[(cc)] Remove a positive number of tokens from each pile by sending
    $(x_0,y_0)$ to $(x_1,y_1)$ where
\begin{align}
(y_0-y_1)-(x_0-x_1) < f(x_0),
\end{align}
\end{enumerate}
where the constraint function $f$ is integer-valued and
non-negative.
\end{definition}

Lemma \ref{doublemex} provides a strategy to prove the following theorem,
which is analogous to Theorem \ref{mastertheorem}
\begin{theorem}[$P$-positions of Relaxed Wythoff]\label{relaxed_solutions}
Let $\mathcal{S}=\{(a_n,b_n)\}_{n=0}^\infty$ where $a_0=b_0=0$ and for all
$n\in\mathbb{Z}_{>0}$ set
\begin{align}
a_n &= \operatorname{mex}\{a_k, b_k \;|\; k < n\}\\
b_n &= f(a_n) + b_{n-1} + a_n - a_{n-1}.
\end{align}
If $f\geq 0$ and $f(1)\geq 1$, then $\mathcal{S} = \{(a_n,b_n)\}_{n\geq 0}$ 
is the set of $P$-positions of a game in Definition \ref{relaxedgame} played 
with the constraint function $f$. Moreover, the sets $\{a_n\}$ and $\{b_n\}$ 
are monotone and complementary sequences of integers covering the natural 
numbers and $\{b_n-a_n\}$ is a monotone sequence.
\end{theorem}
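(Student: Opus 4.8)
The plan is to argue exactly as in the proof of Lemma~\ref{doublemex}: I would show that $\mathcal{S}$ satisfies the two properties which, by the $P$/$N$ structure theorem for acyclic games with a unique terminal position (here $(0,0)\in\mathcal{S}$), characterise the set of $P$-positions --- first, that no legal move joins one element of $\mathcal{S}$ to another, and second, that from every position outside $\mathcal{S}$ some legal move lands in $\mathcal{S}$. The monotonicity and complementarity assertions of the ``Moreover'' clause drop out of the first step of this and are needed for the second, so I would establish them at the start.

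From the recurrence, every integer in $[0,a_{n-1}]$ already lies in $\{a_k,b_k\mid k<n\}$, so $a_n>a_{n-1}$ and $\{a_n\}$ is strictly increasing; then $b_n-b_{n-1}=(a_n-a_{n-1})+f(a_n)\ge 1$, so $\{b_n\}$ is strictly increasing; and $b_n-a_n=f(a_n)+(b_{n-1}-a_{n-1})$ is nondecreasing because $f\ge 0$, with $b_1-a_1=f(1)\ge 1$, whence $b_n>a_n$ for every $n\ge 1$. Complementarity then follows by the standard argument: $a_n=b_m$ with $m<n$ contradicts $a_n=\mex\{a_k,b_k\mid k<n\}$ and with $m\ge n$ contradicts $b_m\ge b_n>a_n$, so $\{a_n\}\cap\{b_n\}=\{0\}$; and if $v\notin\{a_n\}$, taking the largest $n$ with $a_n<v$ gives $v<a_{n+1}=\mex\{a_k,b_k\mid k\le n\}$, forcing $v=b_k$ for some $k\le n$ since $v>a_k$ rules out $v\in\{a_k\}$.

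For the first property, let $(a_n,b_n)\in\mathcal{S}$ with $n\ge 1$. A type-(aa) move changes a single coordinate, and since $\{a_n\}$ and $\{b_n\}$ are strictly increasing with $\{a_n\}\cap\{b_n\}=\{0\}$, the relabelled result cannot be any $(a_m,b_m)$. For a move of type (bb) or (cc) to $(x_1,y_1)$, the constraint gives $y_1-x_1>(b_n-a_n)-f(a_n)=b_{n-1}-a_{n-1}\ge 0$, so no relabelling is needed, and if $(x_1,y_1)=(a_m,b_m)$ then $b_m-a_m>b_{n-1}-a_{n-1}$, forcing $m\ge n$ by monotonicity of $\{b_j-a_j\}$, while $y_1=b_m<b_n$ forces $m<n$ --- a contradiction. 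For the second property, I would use complementarity to write a position outside $\mathcal{S}$ as one in which both coordinates are $a$-values, both are $b$-values, or one of each; every such position in which some coordinate strictly exceeds the matching entry of a nearby $\mathcal{S}$-pair is brought into $\mathcal{S}$ by one type-(aa) move (for instance, a mixed pair $(a_n,b_m)$ with $m<n$ drops to $(a_m,b_m)$, and a pair $(b_n,b_m)$ with $n<m$, or $(a_n,a_m)$ with $a_m>b_n$, drops to $(a_n,b_n)$), leaving only positions of the shape $(a_n,y)$ with $a_n\le y<b_n$ and $y=a_m$ for some $m\ge n$. Such a position requires a diagonal move: one seeks $j<n$ with $b_j<y$ and $b_j-a_j>(y-a_n)-f(a_n)$, and then removing $a_n-a_j$ from the smaller pile and $y-b_j$ from the larger is a legal move (type (bb) when the resulting offset is negative, type (cc) otherwise) onto $(a_j,b_j)\in\mathcal{S}$; the inequality $y-a_n<b_n-a_n=f(a_n)+(b_{n-1}-a_{n-1})$ makes $j=n-1$ satisfy the difference condition, and the defining feature of Definition~\ref{relaxedgame} --- that move (bb) deletes the lower bound on the diagonal move, so that the intervals of reachable targets, which in the proof of Theorem~\ref{mastertheorem_expanded} were bounded, become overlapping half-lines $(-\infty,\,a_n+b_k-a_k+f(a_n)-1]$ with no gaps between them --- is what allows the descent to a smaller admissible $j$ when $b_{n-1}$ already exceeds $y$.

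The step I expect to be the main obstacle is exactly this last case: producing, for a position $(a_n,y)$ with $a_n\le y<b_n$, an explicit legal diagonal move landing on some $(a_j,b_j)\in\mathcal{S}$, since one must simultaneously arrange $j<n$ (so the $a$-coordinate genuinely drops), $b_j<y$ (so a positive amount is removed from the larger pile), and the constraint-function inequality; the mechanism reconciling the three is the overlap-without-gaps of the half-lines, which is precisely the feature for which move (bb) was adjoined and the lower endpoint sent to $-\infty$. Once this is in hand, the structure theorem identifies $\mathcal{S}$ as the set of $P$-positions, and the ``Moreover'' clause has already been proved.
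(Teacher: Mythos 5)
Your route is the paper's own: prove the monotonicity and complementarity claims, show no move connects two members of $\mathcal{S}$, show every position outside $\mathcal{S}$ has a move into $\mathcal{S}$, and invoke the structure theorem, just as in Lemma \ref{doublemex}. The first two parts of your argument are sound. The gap is exactly the step you flag and do not close: for a position $(a_n,y)$ with $a_n\le y<b_n$ you must exhibit $j<n$ satisfying \emph{simultaneously} $b_j<y$ and $b_j-a_j>(y-a_n)-f(a_n)$, and the ``no gaps between the half-lines'' observation does not deliver this. The half-lines $(-\infty,\,a_n+b_k-a_k+f(a_n)-1]$ are nested, so their union argument only shows the constraint inequality is violated for some $k$ (indeed for $k=n-1$); that $k$ need not satisfy $b_k<y$, and passing to a smaller $j$ to gain $b_j<y$ shrinks $b_j-a_j$, with nothing in the hypotheses $f\ge 0$, $f(1)\ge 1$ reconciling the two demands. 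In fact no argument can: take $f(1)=1$, $f(3)=100$, and $f\equiv 0$ elsewhere. The recurrence gives $\mathcal{S}=\{(0,0),(1,2),(3,104),(4,105),(5,106),\dots\}$, and the position $(4,6)\notin\mathcal{S}$ has no legal move into $\mathcal{S}$: no Nim move reaches an $\mathcal{S}$-pair, and a diagonal move from $(4,6)$ requires $(6-y_1)-(4-x_1)<f(4)=0$, i.e.\ $y_1-x_1>2$, which fails for $(0,0)$, $(1,2)$ and $(2,1)$. Hence $\mathcal{S}$ cannot be the set of $P$-positions for this admissible $f$, so the statement needs an extra hypothesis; a growth condition such as $a_m+f(a_m)\le a_n+f(a_n)$ for all $m<n$ (equivalently $f(a_m)-f(a_n)\le a_n-a_m$) makes your choice of the largest $j$ with $b_j<y$ succeed, and it does hold for the Beatty constraint functions that Corollary \ref{main_result} actually uses.

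To be fair, this is also precisely where the paper's own proof is thin: it identifies $b_n$ with $\min\{b\ge a_n : b\ne b_k \text{ and } (b-b_k)-(a_n-a_k)\ge f(a_n)\ \forall k<n\}$ --- imposing the inequality even for $k$ with $b_k\ge b$, where no diagonal move exists in the first place --- and then says to ``apply the same argument of Lemma \ref{doublemex}'', whose diagonal-move step likewise ignores the requirement $b_k<y$. So your proposal faithfully reproduces the paper's strategy, including its unproved step; to make either argument correct you must add, and then actually use, a hypothesis controlling the variation of $f$ along $\{a_n\}$ (or restrict to Beatty constraint functions), at which point your descent to the largest admissible $j$ can be carried out rigorously.
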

\begin{proof}
We proceed by inductively constructing the $P$-positions similar to the proof 
of Lemma \ref{doublemex}. Let $f$ be as given. The trivial $P$-position is 
$(0,0)$. Similar to Lemma \ref{doublemex}, the first non-trivial $P$-position has
left-component $a_1=\mex\{0\}=1$ and
\begin{align}
b_1 = \min\{b\geq 1 \;|\; (b-b_0)-(1-a_0)
	\geq f(1)\}
\end{align}
where we have omitted the absolute value signs. Note that any $b$ in this set
has the property that
\begin{align}
b \geq 1 + b_0 - a_0 + f(1) = 1 + f(1) = a_1 + f(1).
\end{align}
The least such $b$ equals $b_1=1+f(1)$. By the assumption on $f$, this implies 
$b_1\geq 2$. Note that $b_1-a_1 = f(1) > 0 = b_0-a_0$. Therefore, an induction 
hypothesis that
\begin{align}
\begin{cases}
b_k = a_k + b_{k-1} - a_{k-1} + f(a_k) \quad\text{and}\\
b_k - a_k > b_{k-1} - a_{k-1}
\end{cases} \label{induction_hypothesis}
\end{align}
is true for all $1<k<n$.

For an inductive step, we proceed similar to the derivation
of the first non-trivial $P$-position. After adding a quantifier and
restricting $b\neq b_k$, we obtain
\begin{align}
a_n &= \operatorname{mex}\{a_k, b_k \;|\; k < n\}\\
b_n &= \min\{b \geq a_n \;|\; b\!\neq\! b_k \;\text{and}\;
	(b \!-\! b_k) \!-\! (a_n \!-\! a_k) \!\geq\! f(a_n) \forall k\!<\!n\}.
\end{align}
If $b\neq b_k$ lies in the set restriction, then rearranging shows
\begin{align}
b &\geq a_n+f(a_n)+b_k-a_k \quad\forall k < n.
\end{align}
Since the inequality holds for all $k$ and $\{b_k-a_k\}$ is increasing, we may 
as well set $k=n-1$ and find the minimal such $b$ at the endpoint of the 
inequality:
\begin{align}
b_n = a_n+f(a_n)+b_{n-1}-a_{n-1}, \label{b_solved}
\end{align}
completing the proof of the recurrence formula. Re-arranging Equation 
\eqref{b_solved} reveals
\begin{align}
b_n = b_{n-1} + (a_n-a_{n-1}) + f(a_n),
\end{align}
from which we can deduce that $\{b_n\}$ is strictly monotone, because $\{a_n\}$ 
is strictly monotone and $f\geq 0$. A similar re-arrangement shows
$\{b_n-a_n\}$ is monotone.
For a final remark on the $P$-positions, 
notice that $\{a_n,b_n \in \mathcal{S}\}=\mathbb{Z}_{\geq 0}$ since the first 
component consists of minimum excludants.

To prove that a player in an $N$-position may always move to a $P$-position,
apply the same argument of Lemma \ref{doublemex} with the substitution
of the minimum
\begin{align}
\min\{b \geq a_n \;|\; b\!\neq\! b_k \;\text{and}\;
	(b \!-\! b_k) \!-\! (a_n \!-\! a_k) \geq f(a_n) \forall k<n\}
\end{align}
instead of
\begin{align}
\min\{b \geq a_n \;|\; b\!\neq\!b_k \;\text{and}\;
    |(b\!-\!b_k)\!-\!(a_n\!\!-\!a_k)|  \geq f(a_n)\forall k<n
    \}.
\end{align}
\end{proof}

Re-using some of the results from the proof of Theorem 
\ref{beattymodified2pilesaregolden}, we obtain the following corollary to 
Theorem \ref{relaxed_solutions}.
\begin{corollary}\label{main_result}
If $\{a_n\}=\{[n\alpha]\}$ and $\{b_n\}=\{[n\beta]\}$ are complementary Beatty 
sequences with $\alpha<\beta$, then there is a function $f$ such that if a
game 
in Definition \ref{relaxedgame} is played with $f$ as its constraint function,
then the $P$-positions are precisely the pairs $\{(a_n,b_n)\}_{n=0}^\infty$.
\end{corollary}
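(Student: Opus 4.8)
The plan is to write the constraint function down explicitly and then invoke Theorem~\ref{relaxed_solutions}. First I would record that the hypothesis $\alpha<\beta$ together with $1/\alpha+1/\beta=1$ automatically forces $1<\alpha<2<\beta$: $1/\alpha>1/2$ gives $\alpha<2$, $1/\alpha<1$ gives $\alpha>1$, and then $1/\beta=1-1/\alpha<1/2$ gives $\beta>2$. Consequently $a_0=0$, $a_1=[\alpha]=1$, and by Theorem~\ref{rayleigh} the complementary sequences $\{a_n\}$ and $\{b_n\}$ satisfy $a_n=\mex\{a_k,b_k\mid k<n\}$ for every $n\geq1$ --- every positive integer below $[n\alpha]$ is an $\alpha$-term or a $\beta$-term of index $<n$, while $[n\alpha]$ itself is an $\alpha$-term and hence not a $\beta$-term. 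So the first recurrence of Theorem~\ref{relaxed_solutions} already reproduces $a_n=[n\alpha]$, and only the second recurrence has to be arranged to give $b_n=[n\beta]$.

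Rearranging $b_n=f(a_n)+b_{n-1}+a_n-a_{n-1}$ under the demands $b_n=[n\beta]$ and $a_n=[n\alpha]$ shows that $f$ is \emph{forced} to take the value
\begin{align}
f([n\alpha])=\bigl([n\beta]-[(n-1)\beta]\bigr)-\bigl([n\alpha]-[(n-1)\alpha]\bigr)
\end{align}
on each $a$-value; this is precisely the second difference $\Delta^2_{n-1}$ of Equation~\eqref{deltasquare}, i.e.\ $f$ must agree on $\{a_n\mid n\geq1\}$ with the Beatty constraint function parameterized by $\alpha$ from Definition~\ref{beatty_constraint_function}. I would therefore \emph{define} $f$ by this formula on the set $\{[n\alpha]\mid n\geq1\}$ (the $a_n$ are distinct because the sequence is strictly increasing, so this is unambiguous) and set $f\equiv0$ on every other integer; note $f$ in Definition~\ref{relaxedgame} is only required to be an integer-valued function of the smaller pile, which is exactly what we have.

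Next I would verify the two hypotheses of Theorem~\ref{relaxed_solutions}, namely $f\geq0$ and $f(1)\geq1$. Nonnegativity off the $a$-values is immediate, and on the $a$-values the remark following Lemma~\ref{h_function_lemma} gives $\Delta^2\geq[\beta]-2\geq0$, so $f\geq0$ throughout. Moreover $f(1)=f(a_1)=\Delta^2_0=[\beta]-[\alpha]=[\beta]-1\geq1$ since $\beta>2$. Hence Theorem~\ref{relaxed_solutions} applies and identifies the $P$-positions of Relaxed Wythoff with this $f$ as exactly the pairs generated by the recurrence. A short induction then finishes: $(a_0,b_0)=(0,0)$; if $(a_k,b_k)=([k\alpha],[k\beta])$ for all $k<n$, the mex step gives $a_n=[n\alpha]$ by complementarity, and substituting the definition of $f(a_n)$ into $b_n=f(a_n)+b_{n-1}+a_n-a_{n-1}$ telescopes to $b_n=[n\beta]$.

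I do not expect a genuine obstacle here: the corollary is bookkeeping on top of Theorem~\ref{relaxed_solutions}, and the reason it works for \emph{every} complementary Beatty pair --- whereas the analogous statement fails for games of Definition~\ref{modifiedgame} --- is precisely that Relaxed Wythoff removed the interval gaps (by pushing the lower endpoints to $-\infty$ and dropping the absolute value), so the recurrence is never obstructed and no side inequality like $2\min f-\max f\geq1$ is needed. The only points that deserve a moment's care are the two facts that guarantee the hypotheses of Theorem~\ref{relaxed_solutions} for an arbitrary complementary pair: that $a_n=\mex\{a_k,b_k\mid k<n\}$ for complementary Beatty sequences, and that the relation $1/\alpha+1/\beta=1$ with $\alpha<\beta$ forces $\beta>2$ and hence $f(1)=[\beta]-1\geq1$ (which would fail if one only assumed $\alpha<\beta$ without complementarity).
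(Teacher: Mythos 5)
Your proposal is correct and follows essentially the same route as the paper: take the Beatty constraint function parameterized by $\alpha$, verify the two hypotheses of Theorem \ref{relaxed_solutions} via $f(1)=[\beta]-1\geq 1$ and $\Delta^2\geq[\beta]-2\geq 0$, and then read off the Beatty pairs from the recurrence. The extra touches you add (forcing of $f$ on the $a$-values, setting $f\equiv 0$ elsewhere, and the explicit telescoping induction) are harmless elaborations of what the paper leaves as ``seen directly.''
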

\begin{proof}
Theorem \ref{relaxed_solutions} shows that the game in Definition 
\ref{relaxedgame} played with the Beatty constraint function $f$
parameterized by $\alpha$
should yield the desired Beatty $P$-positions. It remains only to prove
 $f(1)\neq 0$ and that $f(a_n)\geq 0$ for all $a_n>0$ to apply
Theorem \ref{relaxed_solutions}.

First,
\begin{align}
f(1) = [\beta]-[\alpha] = [\beta]-1 \geq 2 - 1 = 1,
\end{align}
so the first criterion is satisfied. For the second criterion, recall the proof 
of Theorem \ref{beattymodified2pilesaregolden}, which showed that 
$[\beta]-2\leq f \leq [\beta]$. Since $[\beta]\geq 2$, we can deduce that 
$f\geq 0$. Therefore, the hypotheses on $f$ are satisfied, so the $P$-positions 
of the game in Definition \ref{relaxedgame} with the function $f$ may now be 
evaluated using the recurrence in the statement of this theorem.
It is seen directly that the $P$-positions are the complementary Beatty
sequences parameterized by $\alpha$.
\end{proof}
Therefore, Relaxed Wythoff gives simple rules for which the $P$-positions
are pairs of given complementary Beatty sequences.
In Figure \ref{successful_game}, we return to the example in Figure
\ref{figure1}.
Playing the game as in Definition \ref{relaxedgame} with the same constraint
function yields $P$-positions which coincide with the associated Beatty pairs.
\begin{figure}[t]
{\small
\begin{tabular}{c|c|c|c|c|c|c|c|c|c|c}
$n$  &  0 & 1 & 2 & 3 & 4 & 5 & 6 & 7 & 8 & 9\\ \hline
$a_n$ & 0 & 1 & 2 & 4 & 5 & 7 & 8 & 10 & 11 & 13\\
$b_n$ & 0 & 3 & 6 & 9 & 12 & 16 & 19 & 22 & 25 & 29\\ \hline
$[n\alpha]$ & 0 & 1 & 2 & 4 & 5 & 7 & 8 & 10 & 11 & 13\\
$[n\beta]$ & 0 & 3 & 6 & 9 & 12 & 16 & 19 & 22 & 25 & 29
\end{tabular}
}
\caption{Top: $P$-positions of the game played by Definition \ref{relaxedgame}
with the constraint function in Equation
\eqref{modified_beatty_constraint_function} parameterized
by $\alpha=1+\sqrt{5}/5$. Bottom: Beatty sequences parameterized by $\alpha$.
Compare to Figure \ref{figure1}.}
\label{successful_game}
\end{figure}

\section{Discussion and Conclusion}
The forward problem of finding the $P$-positions of a given game has
existed since game theory began. The inverse problem
of finding a game for given $P$-positions has recently been developed
and offers a fresh perspective on classical problems.
In this paper, we precisely characterized the Beatty
sequences arising from the $P$-positions of a known game family.
The analysis of the sequence $\Delta^2_{n-1}$ was
crucial and yielded insight to finding simple rules for another game with Beatty
$P$-positions. A solution was proposed to the inverse problem posed by
Fraenkel at the 2011 BIRS workshop via the definition of Relaxed Wythoff played
with a Beatty constraint function. Note that this inverse problem
was further refined at the end of \cite{fraenkelinvariance}:
\begin{quote}
Is it possible to find short 2-invariant game rules, without disclosing
any part of the $P$-positions?
\end{quote}
Our proof of Theorem \ref{beattymodified2pilesaregolden} showed that
if a game is played with the Beatty constraint function parameterized
by some $\alpha$ in the statement of the theorem, then the game is $k$-invariant
for some $k\leq 3$. Each of the irrationals in the countable family of
Theorem \ref{mastertheorem_expanded} corresponds to a Beatty constraint function
inducing a 2-invariant game.
Supplying the players with the formula for the Beatty
constraint function automatically
reveals the irrationals which parameterize the Beatty $P$-positions.
Revealing some aspect of the irrationals is a common feature of the games
in \cite{fraenkelinvariance} and \cite{goldberg_rulesets}. In
spite of this, each game is indeed simple. A move can be validated with finite calculations. Moreover, the study of the game family in Definition
\ref{modifiedgame} over Beatty constraint functions
is significant on its own because it expands the study of non-monotone
constraint functions, also seen in \cite{fraenkelboole}. A striking feature
of this analysis revealed that the \emph{uncountable} family of complementary
Beatty sequences parameterized by $\alpha\in(1,5/4)$ arise as $P$-positions
of games in the family of Definition \ref{modifiedgame}.

\section{Acknowledgments}

This work was completed under the care and supervision of several 
institutions and advisors. Gratefully, we acknowledge Hampshire College 
and The Five College Consortium for providing the authors with a place 
to meet each other and explore mathematics in an experimental fashion while the first author was an undergraduate student. 
Amherst College and later Smith College provided new academic homes for the second
author after leaving Hampshire College. Once the first
author completed his studies at 
Hampshire College, he was given a host of new ideas at The University 
of California, Santa Cruz, particularly from Dr. Fran\c{c}ois Monard.
The first author also thanks The Ross Program and Dr. Jim Fowler for 
providing a motivating and hospitable work environment. The authors also thank
a reviewer for helpful references and suggestions.



\end{document}